 \newtheorem{thm}{Theorem}
 \newtheorem{cor}{Corollary}
 \newtheorem{lem}{Lemma}
 \newtheorem{prop}{Proposition}
 \theoremstyle{definition}
 \newtheorem{defn}{Definition}
 \newtheorem{ex}{Example}
 \newtheorem{rem}{Remark}
 \newcommand{\nc}{\newcommand}
 \newcommand{\Hom}{\operatorname{Hom}}
 \newcommand{\End}{\operatorname{End}}
 \newcommand{\Ext}{\operatorname{Ext}}
 \newcommand{\Ind}{\operatorname{Ind}}
\newcommand{\Res}{\operatorname{Res}}
\newcommand{\mbb}{\mathbb}
\newcommand{\mcal}{\mathcal}
\newcommand{\mbf}{\mathbf}
\newcommand{\ul}{\underline}
\nc{\ula}{\ul{\alpha}}
\nc{\ui}{\ul{i}}
\nc{\uk}{\ul{k}}
\nc{\uik}{\ui,\uk}
\nc{\wt}{\widetilde}
\nc{\redtext}[1]{\textcolor{red}{#1}}
\nc{\zb}[1]{\redtext{#1}}
\begin{document}
 \title{Geometric approach to Hall algebra of representations of Quivers over local rings}
 \author{Zhaobing Fan}

  \maketitle

\begin{abstract}
By using  perverse sheaves on representation spaces of quivers over  $k[t]/(t^n)$ and
 jet schemes over  flag varieties,
we construct a geometric composition algebra $\mbf K$ under Lusztig's framework on geometric realizations of the negative part of quantum algebras.
Simple perverse sheaves in $\mbf K$ form the canonical basis of $\mbf K$.
The relationships among the algebra $\mbf K$, the composition algebra of locally projective representations of quivers over $k[t]/(t^n)$ and
quantum generalized Kac-Moody algebra are provided.
\\Key Words: Perverse sheaves;\ Hall algebras;\ exact categories;\ representations of quivers;\ quantum generalized Kac-Moody algebras
 \end{abstract}

\setcounter{tocdepth}{1}
\tableofcontents

 \section{Introduction}

The canonical basis theory of quantum algebras is vitally important in Lie theory.
This basis has many remarkable properties such as integrality and positivity of
structure constants.
The canonical basis of the negative part of quantum algebras can be
naturally obtained via their geometric realizations.
The canonical basis is constructed by Lusztig in \cite{L1} for finite types,
as well as by  Lusztig, Kang and Schiffmann etc. in \cite{L2,L5, KS} for other types
via  geometric realizations of the negative/positive part of quantum algebras.

In the geometric realization of the negative part of quantum algebras,
flag varieties over an algebraically closed field $k$ are frequently used.
In the present paper,
we consider jet schemes over these flag varieties.
More precisely, we consider the varieties of filtrations of projective $R$-modules,
where $R=k[t]/(t^n)$.
Under Lusztig's framework, we construct an algebra $\mbf K$, which is called the geometric  composition algebra over $R$.
The algebraic correspondence to this geometric setting is locally projective representations (see definition in section \ref{sec4.2}) of
 quivers over $R$.
The category of locally projective representations of
 quivers over $R$ is not an abelian category anymore, but
rather an exact category.
In literature, Hall algebras over exact categories are studied in \cite{H1, KSV} etc.
The subalgebra of this Hall algebra generated by irreducible locally projective representations is then called the algebraic composition algebra.
There is an algebra homomorphism from the geometric composition algebra to the algebraic one.
Simple perverse sheaves in $\mbf K$ form the canonical basis of $\mbf K$.
It will be interesting to describe canonical basis elements analogous to \cite{Li} by Li, via our algebraic setting.

From a geometric point of view,
jet schemes over flag varieties themselves are of particular interest.
They are vector bundles over ordinary flag varieties.
From an algebraic point of view,
a locally projective representation of quivers over $R$ is a deformation of a representation of quivers over $k$.
Representations of quivers over rings can be traced back to \cite{LZ}.
More generally, one could consider  representations of quivers over an algebraic variety $X$,
 namely, assigning a vector bundle on $X$ to each vertex and
 a bundle map to each arrow.
 The obtained category is still an exact category.
 This issue will be addressed in a forthcoming paper.
 What we considere here is a special case of it.

In the context we are considering,
the first technical barrier is that the jet scheme over
flag varieties is
not a projective variety.
This causes the projection map along the
jet scheme to representation spaces not being a proper map. Hence, the decomposition
theorem in \cite{BBD} cannot be directly applied here.
However,
the projection map is still good enough because we can decompose the
projection map into a composition of a proper map and a vector
bundle.
One could consider representations of quivers over arbitrary commutative rings,
but the corresponding geometry does not always have this nice property.
The second technical barrier is that Lusztig's restriction functor is not well-defined in this setting because
submodules and quotient modules of projective $R$-modules are not necessarily projective.
To tackle this problem, we replace Lusztig's restriction functor by its free part after showing that
it can be decomposed into the sum of the free part
and the torsion part.
On the algebraic side,  it will be interesting to construct a coalgebra structure and the antipode
 for the algebraic composition algebra analogous to \cite{Gr, X1} by Green and Xiao, respectively.

 The category of representations of a quiver over $R$ is a subcategory of representations of a new quiver,
 obtained by adding loops on each vertex, over field $k$.
 The simplest example is the quiver
$\Gamma$ with one vertex and no arrows.
In this example, the
category of representations of the quiver $\Gamma$ over
$R$ is the category of $R$-modules. This category is
equivalent to the category of nilpotent representations of the
Jordan quiver over $k$.
In \cite{KS}, Kang and Schiffmann give a
geometric realization of the positive part of  quantum generalized Kac-Moody algebras by using
quivers with multiple loops,
whose canonical bases are given by semisimple perverse sheaves.
Therefore, one may expect to obtain Kang and
Schiffimann's result through geometry of representations of quivers over the
local ring $R$.
In fact, there is an algebra homomorphism from the quantum
generalized Kac-Moody algebra with  charges are all equal to one to the geometric composition algebra.
Moreover, the canonical basis of the geometric composition algebra are given by simple perverse
sheaves.

 This paper is organized as follows. In Section $\rm 2$, we
briefly review
 the theory of perverse
sheaves.
 In Section $\rm 3$, we study the geometric property of jet scheme over ordinary flag
varieties and perverse sheaves on the locally projective representation
spaces over $R$.
We  construct the geometric composition algebra,
whose canonical basis and monomial basis are provided.
In Section 4, we study the Hall algebra over the category of locally projective representations of quivers over $R$,
and the relationship between the geometric composition algebra and the algebraic one.
In Section 5, the relationship between quantum generalized Kac-Moody algebras
and the geometric composition algebra is provided. \\

{\bf Acknowledgments}
  I would like to thank  Zongzhu Lin for his guidance and
  help. I also would like to thank
   Yiqiang Li for valuable comments and bringing my attention to paper \cite{Braden} which simplifies the proof of Lemma \ref{lem3.3}.
  I am grateful to Andrew Hubery for  email communications concerning Hall
  algebras over exact categories. %We thank the reviewer for the efforts and helpful comments.
  This work %starting from 2006,
  is supported by The China Scholarship Council and Harbin
  Engineering University, and is partially supported by the
  Mathematics Department at Kansas State University.

%\setcounter{tocdepth}{1}
%\tableofcontents

\section{Preliminary}%(\cite{BBD,FK,L5})

For the readers' convenience,
we set up notations regarding geometry and perverse sheaves in this section.
We refer to \cite{BBD,FK,L5} for more details.

Throughout this paper,  $k$ is an algebraic closure of $\mathbb{F}_q$.
All algebraic varieties are over $k$ and have an $\mbb F_q$-structure.
Moreover, all algebraic varieties are of finite type and separable.

\subsection{Perverse sheaves}

Let $X$ be an algebraic variety. Denote by
$\mathcal{D}(X)$ the bounded derived category of
$\overline{\mathbb{Q}}_l$-constructible sheaves on $X$.
 Here $l$ is a fixed
prime number which is invertible in $k$, and
$\overline{\mathbb{Q}}_l$ is an algebraic closure of the field
$\mathbb{Q}_l$ of $l$-adic numbers.

 Let $\mathcal{M}(X)$ be the full subcategory of $\mathcal{D}(X)$ consisting of perverse sheaves on $X$.
The simple objects of $\mathcal{M}(X)$ are given by
the Deligne-Goresky-Macpherson intersection cohomology complexes
corresponding to various smooth irreducible locally closed subvarieties of $X$ and
 irreducible local systems on them.

Let $\mcal D_{\leq w}(X)$ (resp. $\mcal D_{\geq w}(X)$) be the full subcategory of $\mcal D(X)$
consisting of all mixed complexes, whose $i$-th cohomology has weight $\leq w+i$ (resp. $\geq w+i$).
A complex $K$ is called pure of weight $w$ if $K\in \mcal D_{\leq w}(X)\cap \mcal D_{\geq w}(X)$.

We denote by $\mbf 1_X$ the constant sheaf on $X$, and simply write $\mbf 1$ if there is no confusion in context.

\subsection{Functors}
For a complex $K \in \mathcal{D}(X)$,
denote by $\mathcal{H}^n(K)$ the $n$-th cohomology sheaf of $K$.
For any  $j\in \mbb Z$, let $[j]: \mathcal{D}(X) \rightarrow
\mathcal{D}(X)$ be the shift functor which satisfies
$\mathcal{H}^n(K[j])=\mathcal{H}^{n+j}(K)$.

Let $f: X \rightarrow Y$ be a morphism of algebraic varieties. There
are functors $f^*: \mathcal{D}(Y) \rightarrow \mathcal{D}(X), \ f_*:
\mathcal{D}(X) \rightarrow \mathcal{D}(Y), \ f_!: \mathcal{D}(X)
\rightarrow \mathcal{D}(Y)$ (direct image with compact support), and
$f^!: \mathcal{D}(Y) \rightarrow \mathcal{D}(X)$.

Denote by
$\mathbb{D}K$ the Verdier dual of $K \in
\mathcal{D}(X)$ and by ${}^p\!H^n: \mcal D(X)\rightarrow \mcal M(X)$ the perverse
cohomology functor for any $n\in \mbb Z$.

A complex $K \in \mathcal{D}(X)$ is called semisimple if ${}^p\!H^n(K)$ is
semisimple in $\mathcal{M}(X)$ for all $n\in \mbb Z$ and $K$ is isomorphic to
$ \bigoplus_{n\in \mbb Z}{}^p\!H^n(K)[-n]$ in $\mathcal{D}(X)$.

 For any  $n\in \mbb Z$, denote by $\mathcal{M}(X)[n]$ the full
subcategory of $\mathcal{D}(X)$ whose objects are of the form $K[n]$
for some $K \in \mathcal{M}(X)$.
Assume that $f: X \rightarrow Y$ is a locally trivial
principal $G$-bundle. Let $d={\rm dim}(G)$. If $K \in
\mathcal{M}(Y)[n+d]$, then $f^*K \in \mathcal{M}_G(X)[n]$.
Furthermore, the functor $f^*: \mathcal{M}(Y)[n+d] \rightarrow
\mathcal{M}_G(X)[n]$ defines an equivalence of categories. The
inverse $f_{\flat}: \mathcal{M}_G(X)[n] \rightarrow \mathcal{M}(Y)[n+d]$
is given by $f_{\flat}(K)={}^p\!H^{-n-d}(f_*K)[n+d]$.

\subsection{Some Properties}\label{decomposition}

\begin{itemize}
  \item[(a)] Simple perverse sheaves are pure and pure complexes are semisimple.

  \item[(b)]
 % If $f: X \rightarrow Y$ is a proper morphism then $f_!({\bf 1}) \in \mathcal{D}(Y)$ is a semisimple complex.
Let $f: X \rightarrow Y$ be a morphism of varieties with $X$ smooth.
If $f$ is a proper map, then $f_!({\bf 1})$ is a semisimple complex in $\mathcal{D}(Y)$.
More generally, if there is a
partition $X=X_0\cup X_1 \cup \cdots \cup X_m$ of locally closed subvarieties, such that $X_{\leq
j}=X_0 \cup \cdots \cup X_j$ is closed for $j=0,\cdots,m$, and for
each $j$ there are morphisms $X_j \xrightarrow{f_j} Z_j
\xrightarrow{f'_j}Y_j$, such that $Z_j$ is smooth, $f_j$ is a vector
bundle, $f'_j$ is proper and $f'_jf_j=f|_{X_j}$, then $f_!({\bf 1})$
 is a semisimple complex in $\mathcal{D}(Y)$.

\item[(c)] If the following diagram
$$\xymatrix{
X \ar[r]^{f} \ar[d]_r & Y \ar[d]^s \\
 Z \ar[r]^{g} & W} $$
is a cartesian square and $s$ is a proper map (resp. $g$ is a smooth map), then
$$r_!f^*=g^*s_!: \mathcal{D}(Y) \rightarrow \mathcal{D}(Z).$$
 % It is called a proper (resp. smooth) base change.
\end{itemize}

\subsection{Characteristic functions of complexes}\label{sec2.5}

 Let $F$ be a Frobenius morphism of $X$ and $X^F$ be the set of fixed points by $F$.
 For any complex $\mathcal{F} \in D^b(X^F,\overline{\mathbb{Q}}_l)$,
 such that $F^*\mathcal{F} \simeq \mathcal{F}$,
 we choose an isomorphism $\phi_{\mathcal{F}}$ for each such $\mathcal{F}$.
 The characteristic function of $\mathcal{F}$ with respect to $\phi_{\mathcal{F}}$,
 denote by $\chi_{\mathcal{F},\phi_{\mathcal{F}}}$ can be defined as follows
$$\chi_{\mathcal{F},\phi_{\mathcal{F}}}(x)%=Tr(\phi_{\mathcal{F},x}: \mathcal{F}_x \rightarrow \mathcal{F}_x)
=\sum_{i\in \mbb Z}(-1)^iTr(\phi_{\mcal F,x}^i: \mcal H^i(\mathcal{F}_x) \rightarrow \mcal H^i(\mathcal{F}_x)),\  \forall \ x \in X^F,$$
where $\mathcal{F}_x$ is the stalk of $\mathcal{F}$ at $x$.

  If $f: X \rightarrow Y$ is a morphism defined over $\mathbb{F}_q$, and $\mathcal{F} \in D^b(X^F, \overline{\mathbb{Q}}_l)$, then for any $y \in Y^F$,
      $$\chi_{f_!\mathcal{F},\phi_{\mathcal{F}}}(y)=\sum_{x\in f^{-1}(y)^F}\chi_{\mathcal{F},\phi_{\mathcal{F}}}(x).$$

 \section {Lusztig's geometric composition algebra $\mbf K$}

In this section, we fix $R=k[t]/(t^n)$, a loop-free quiver
$\Gamma=(I,H,s,t)$ and
an $I$-graded projective $R$-module $V=\bigoplus_{i \in
I}V_i$.
We note that, in the case of $R=k[t]/(t^n)$,
$V$ is a projective $R$-module if and only if $V$ is a free $R$-module.
Via the ring homomorphism $k \hookrightarrow R$,
an $R$-module $V$ can be thought of as a $k$-vector space.
We shall assume that $V_i$ is a finite dimension vector space for all $i\in I$.
For simplicity, we write $h'=s(h)$ and $h''=t(h)$ for all $h\in H$.
We define
\begin{equation}\label{eq3.6}
  E_V^k=\bigoplus_{h \in H} \Hom_k(V_{h'}, V_{h''}),\quad  E_V^R=\bigoplus_{h \in H} \Hom_R(V_{h'}, V_{h''}),
\end{equation}
\begin{equation}\label{eq3.9}
  G_V^k=\bigoplus_{i\in I} GL_k(V_i),\quad G_V^R=\bigoplus_{i\in I} GL_R(V_i),
\end{equation}
where $GL_k(V_i)$ (resp. $GL_R(V_i)$) is the set of all $k$-automorphisms (resp. $R$-automorphisms) of $V_i$.
The algebraic group $G_V^R$ (resp. $G_V^k$) acts on $E_V^R$ (resp. $E_V^k$) by
conjugation, i.e., $gx=x'$ and $x'_h=g_{h''}x_hg^{-1}_{h'}$ for all
$h \in H$.

Given $R$-modules $V_1$ and $V_2$, $\Hom_k(V_1,V_2)$ has an $R$-module structure defined by
$$(r f)(v)=f(rv)-rf(v),$$
for all $r \in R$, $v \in V_1$ and $f \in \Hom_k(V_1,V_2)$. Then
$$\Hom_R(V_1,V_2)=\left\{f \in \Hom_k(V_1,V_2)\ |\ rf=fr, \ \forall r\in R\right\}.$$
Since  $E_V^k$ is an affine $k$-variety and $rf=fr$ for different $r \in R$ are algebraic equations, $E_V^R$ is a closed $k$-subvariety of $E_V^k$. Similarly, $G_V^R$ is a closed algebraic $k$-subgroup of $G_V^k$.

\subsection{Flags}\label{sec4.2.1}

Let $(\uik)=((i_1,k_1), \cdots, (i_m,k_m)) \in (I
\times \mathbb{N})^m$.
For such $(\uik)$, we shall write $(\ui, n\uk)=((i_1,nk_1), \cdots, (i_m,nk_m))$.

\subsubsection{} A  {\it $k$-flag} of type
$(\uik)$ in an $I$-graded $k$-vector space $V$ is a
sequence
$$\frak{f}=(V=V^0\supset V^1 \supset \cdots \supset V^m=0)$$
of $I$-graded vector spaces such that $V^{l-1}/V^l \simeq k^{\oplus
k_l}$ is concentrated at vertex $i_l$ for all $l=1,2, \cdots , m$.

Let $\mathcal{F}_{V, \uik}^k$ be the
$k$-variety of all $k$-flags of type $(\uik)$ in $V$,
and  $\widetilde{\mathcal{F}}_{V,
\uik}^k=\{(x,\frak{f}) \in E_V^k \times
\mathcal{F}_{V, \uik}^k \mid$ $\frak{f}$ is
$x$-stable$\}$, where $\frak{f}$ is $x$-stable if $x_h(V^l_{h'}) \subset
V^l_{h''}$, for all $h \in H$ and $ l=1, \cdots,m$.

The algebra group $G_V^k$ acts on $\mathcal{F}^k_{V,
\uik}$ by $g\cdot \frak{f} \mapsto g\frak{f}$, where
$$g\frak{f}=(gV^0\supset gV^1 \supset \cdots \supset gV^m=0)\ \ {\rm  if}\ \  \frak{f}=(V=V^0\supset V^1 \supset \cdots \supset V^m=0).$$
And $G_V^k$ acts diagonally on $\widetilde{\mathcal{F}}^k_{V,
\uik}$, i.e., $g\cdot (x,\frak{f})=
(gx,g\frak{f})$.

\subsubsection{}
An {\it $R$-flag} of type $(\uik)$ in an $I$-graded
$R$-module $V$ is a sequence
$$\frak{f}=(V=V^0\supset V^1 \supset \cdots \supset V^m=0)$$
of $I$-graded $R$-modules such that $V^{l-1}/V^l \simeq k^{\oplus
k_l}$ as $k$-vector spaces concentrated at vertex $i_l$ for all $l=1,2, \cdots , m$.

Similarly, let  $\mathcal{F}_{V, \uik}^R$ be the $k$-variety of all $R$-flags of type $(\uik)$ in $V$.
Note that if $V$ is an $R$-module, then a $k$-subspace $W \subset V$ is an $R$-submodule if and only if $(1+t)W=W$.
 So $\mathcal{F}_{V, \uik}^R$
 is a
closed subvariety of
$\mathcal{F}_{V, \uik}^k$.
Let $\widetilde{\mathcal{F}}_{V, \uik}^R=\widetilde{\mathcal{F}}_{V, \uik}^k \cap (E_V^k\times \mcal F_{V,\uik}^R)$.

\subsubsection{}
A {\it free $R$-flag} of type
$(\uik)$ in an $I$-graded free $R$-module $V$ is a
sequence
$$\frak{f}=(V=V^0\supset V^1 \supset \cdots \supset V^m=0)$$
of $I$-graded free $R$-modules such that $V^{l-1}/V^l \simeq
R^{\oplus k_l}$ is concentrated at vertex $i_l$ as $R$-modules for all $l=1,2, \cdots , m$.

Let $\mathcal{F}_{V, \uik}^{f} \subset
\mathcal{F}_{V, \ui,n\uk}^R$ be the subvariety
of all free $R$-flags of type $(\uik)$.
Let $\widetilde{\mathcal{F}}_{V, \uik}^{f}=\widetilde{\mathcal{F}}_{V, \ui,n\uk}^R \cap (E_V^R \times \mathcal{F}_{V,\uik}^{f})$ and $\mathcal{F}_{V, \uik}^{t}$ (resp. $\widetilde{\mathcal{F}}_{V,
\uik}^{ t}$) be the complement of $\mathcal{F}_{V, \uik}^{f}$ (resp. $\widetilde{\mathcal{F}}_{V, \uik}^{f}$)
in $\mathcal{F}_{V, \ui,n\uk}^R$ (resp. $ \widetilde{\mathcal{F}}_{V, \ui,n\uk}^R$).
The $G_V^R$-action on these $k$-varieties induced from its action on $\mcal F_{V, \ui, n\uk}^k$ (resp. $\wt{\mcal F}_{V, \ui,n\uk}^k$).

For any free $R$-module $V$, we write $V_0=V/tV$.
We  define the evaluation map as follows,
\begin{align*}
  e: \mathcal{F}_{V, \uik}^{f} &\rightarrow \mathcal{F}_{V_0, \uik}^k\\
    \frak{f}=(V\supset V^1 \supset \cdots \supset V^m=0) &\mapsto e(\frak{f})=(V_0\supset V^1_0 \supset \cdots \supset V^m_0=0).
\end{align*}

\begin{lem}[\cite{Mus}]\label{rem3.3}
  $\mathcal{F}_{V, \uik}^{f}$ is the $(n-1)$th-jet scheme over $\mathcal{F}_{V_0, \uik}^k$ and
  $\dim_k\mathcal{F}_{V, \uik}^{f}=n \dim_k \mathcal{F}_{V_0, \uik}^k$.
\end{lem}
{\bf Notations and convention}: (a)
From now on, we shall simply write
$\mcal F_{\uik}^f$ instead of $\mcal F_{V, \uik}^f$ unless we want to specify $V$.
Similarly, this convention is applied to other notations.

 (b) For any $(\uik)
 \in (I \times \mathbb{N})^m$ and each $i \in I$, let
$N_i(\uik)=\sum_{r<r'}k_rk_{r'}\delta_{ii_r} \delta_{ii_{r'}}$;
for each $h \in H$, let $N_h(\uik)
=\sum_{r'<r}k_{r'}k_r\delta_{h'i_{r'}}\delta_{h''i_{r}}$,
where $\delta$ is the Kronecker delta.

(c)  Dimension
always refers to $k$-dimension, so we shall denote it by ``${\rm dim}$"
instead of ``${\rm dim}_k$" and rank always refers to the rank of free
$R$-modules, and we shall therefore denote it by ``${\rm Rank}$".

\begin{prop}\label{prop3.1}
  \begin{itemize}
    \item [{\rm (a)}] $\mathcal{F}_{ \uik}^R$ is
    a projective variety.

    \item [{\rm (b)}] $\mathcal{F}_{\uik}^{f}$ is an open smooth subvariety of $\mathcal{F}_{\ui, n\uk}^{R}$
    and $\mathcal{F}_{ \uik}^{t}$ is a closed subvariety of $\mathcal{F}_{ \ui, n\uk}^{R}$.

    \item [{\rm (c)}] The evaluation map $e: \mathcal{F}_{V, \uik}^{f} \rightarrow \mathcal{F}_{V_0, \uik}^k$ is a vector bundle of dimension $(n-1)\sum_{i\in I}N_i(\uik)$. Hence the dimension of $\mathcal{F}_{
\uik}^{f}$ is $n\sum_{i\in I}N_i(\uik)$.
  \end{itemize}
\end{prop}

\begin{proof}
 {\rm (a)} It is well-known that $\mathcal{F}_{\uik}^k$ is a projective variety.
    Since $\mathcal{F}_{ \uik}^R$ is a closed subvariety of $\mathcal{F}_{\uik}^k$,
    it is also a projective variety.

{\rm (b)}
To show that $\mathcal{F}_{ \uik}^{f}$ is an open subset of $ \mathcal{F}_{\ui, n\uk}^{R}$,
we first consider an easy case that $I$ only contains one vertex.

Suppose that $V$ is a free $R$-module of Rank $l$, which can be thought as an $nl$-dim $k$-vector space.
  Let $Gr^k(sn,V)$ be
  the set of all $sn$-dimensional $k$-subspaces in $V$
  and $Gr^R(sn,V)=\{\frak{f} \in Gr^k(sn,V)\ |\ (1+t)\frak{f}=\frak{f}\}$.
  Let $Gr^{f}(s,V)$ be the set of all free
  $R$-submodules with Rank $s$ in $V$.
  Clearly, $Gr^{f}(s,V) \subset Gr^R(sn,V) \subset Gr^k(sn,V)$.

  Let
  $\widetilde{G}r^R(sn,V)=\left\{(W,b_W)\ |\ W \in Gr^R(sn,V)\right.$ and $
  b_W$ is a $k$-basis of $\left.W \right\}$.
  The first projection map $\pi:
  \widetilde{G}r^R(sn,V) \rightarrow Gr^R(sn,V)$ is a frame bundle.

  In general, for any free $R$-module $W \simeq R^{\oplus r}$,
    the $R$-module structure induces a nilpotent $k$-linear map $t: W
    \rightarrow W$ with $\dim({\rm Ker}(t))=r$.
We define
  \begin{align*}
\phi: \widetilde{G}r^R(sn,V) & \rightarrow {\rm Mat}(sn), \\
(W,b_W) & \mapsto M(b_W,t),
  \end{align*} where ${\rm Mat}(sn)$ is the set of all $sn \times sn$
  matrices and $M(b_W,t)$ is the matrix of $t$ under the basis
  $b_W$. Clearly, $\phi$ is a morphism of algebraic varieties.

For any $R$-submodule $W
    \subset V$ with $k$-dimension $ns$, $W$ is a free $R$-module if
    and only if $\dim({\rm Ker}(t|_W))=s$, i.e., $t|_W$ has maximal rank
    $(n-1)s$.
Therefore, $Gr^{f}(s,V)=\pi(\phi^{-1}({\rm Mat}(sn)_{rk=s(n-1)}))$, where
  ${\rm Mat}(sn)_{rk=s(n-1)}$ is the set of all matrices with rank $s(n-1)$.
  Since ${\rm Mat}(sn)_{rk=s(n-1)}$ is an open subset in ${\rm Mat}(sn)_{rk \leq s(n-1)}$,
\ $\phi^{-1}({\rm Mat}(sn)_{rk=s(n-1)})$ is open in $\phi^{-1}({\rm Mat}(sn)_{rk \leq
  s(n-1)})=\widetilde{G}r^R(sn,V)$.
  Moreover, $\pi$ is a principle $GL_{sn}(k)$-bundle and $\phi^{-1}({\rm Mat}(sn)_{rk=s(n-1)})$ is $GL_{sn}(k)$-stable,  $Gr^{f}(s,V)$ is open in
  $Gr^R(sn,V)$.

We now back to general cases.
By the above argument,
$\mcal F_l=\{\mathfrak f=(V^0\subset \cdots\subset V^m)\in \mcal F_{\ui, n\uk}^R|V^l\ \text{is a free}\ R-{\rm module}\}$ is
an open subset of $\mcal F_{\ui, n\uk}^R$.
    So $\mathcal{F}_{ \uik}^{f}=\cap_l \mcal F_l$ is an open subset of $\mcal F_{\ui, n\uk}^R$.
     The smoothness follows from Lemma \ref{rem3.3} and the notes after Lemma 1.2 in \cite{Mus}.
     This proves the first statement. The second statement follows from the first one.

{\rm (c)} Recall that $V_0=V/tV$.
It is well-known that $\mcal F_{V, \uik}^k=G_{V}^R/P^R$ (resp. $\mcal F_{V_0, \uik}^k=G_{V_0}^k/P^k$),
where $P^R$ (resp. $P^k$) is the parabolic subgroup of $G_V^R$ (resp. $G_{V_0}^k$) fixing a given flag in $\mcal F_{V, \uik}^k$ (resp. $\mcal F_{V_0, \uik}^k$).

Since $G_V^R=\mbf T\rtimes G_{V_0}^k $, where $\mbf T=\left\{Id+tB |B \in \End_R(V) \right\}$,
    any element $ A\in G_V^R$ can be written as $A=h\cdot A_0$ with $A_0
    \in G_{V_0}^k$ and $h \in \mbf T$.
    Then the evaluation map $e$ sends $h\cdot A_0$ to $A_0$. Therefore, $\forall x \in G_{V_0}^k$, we have $e^{-1}(x)=\mbf T/(\mbf T \cap
    P^R) \cdot x$. As a set, $\mbf T/(\mbf T \cap
    P^R) \cdot x$ is in 1-1 correspondence to $\mbf T/(\mbf T \cap
    P^R)$, and $\mbf T/(\mbf T \cap
    P^R)$ is a direct sum of quasi-lower triangular matrices with
    entries in $tR$ for all $i \in I$, which is clearly a $k$-vector space
    of
    dimension $(n-1)\sum_{i\in I}N_i(\uik)$.
It is clear that this gives a vector bundle structure.
\end{proof}

\begin{prop}\label{prop3.2}
  {\rm (a)}
    $\widetilde{\mathcal{F}}_{ \uik}^{f}$ is a
smooth irreducible variety, and the second projection map
$p_2: \widetilde{\mathcal{F}}_{ \uik}^{f}
\rightarrow \mathcal{F}_{ \uik}^{f}$ is a
vector bundle of dimension $n\sum_{h\in H}N_h(\uik)$. So the $k$-dimension
of $\widetilde{\mathcal{F}}_{ \uik}^{f}$
is
 $d(\uik):=n\sum_{i\in I}N_i(\uik) +n\sum_{h\in H}N_h(\uik)$.

{\rm (b)} Let $\pi_{ \uik}^f: \widetilde{\mathcal{F}}_{ \uik}^{f}
\rightarrow E_V^R$ be the first projection map. Then $(\pi_{ \uik}^f)_!{\bf 1}$ is
semisimple.
\end{prop}

\begin{proof}
{\rm (a)} Let $\widetilde{\mathcal{F}}_R^k=\left\{(x,\frak{f}) \in E_V^R \times s(\mathcal{F}_{V_0, \uik}^k) \ |\  \frak{f} \ {\rm is}\ x-{\rm stable}\right\}$,
where $s: \mathcal{F}_{V_0, \uik}^k \rightarrow \mathcal{F}_{V, \uik}^{f}$ is the zero section of the vector bundle $e$.
   By the same argument as that for Lemma 1.6 in \cite{L2},
   the second projection map $p_2': \widetilde{\mathcal{F}}_R^k
    \rightarrow  \mathcal{F}_{V_0, \uik}^k$ is a vector bundle.
    We  calculate the dimension of fibers.
    For any $\frak{f}=(V \supset V^1 \supset \cdots \supset V^m=0) \in \mathcal{F}^k_{V_0,\underline{i},\underline{k}}$,
   let $Z=(p_2')^{-1}(\frak f)$. The first projection identifies $Z$ with the set of all $x \in E_V^R$ such that $x_h(V_{h'}^l) \subset V_{h''}^l$ for all $h \in H$ and all $l=0,1,\cdots,m$. This is a linear subspace of $E_V^R$ because we can choose a basis for each $V_i$ such that $x_h$ are upper triangle matrices for each $h \in H$. Hence its dimension is equal to
   $$n \sum_{l' \leq l, h \in H}({\rm Rank}(V^{l'-1}_{h'})-{\rm Rank}(V^{l'}_{h'}))({\rm Rank}(V^{l'-1}_{h''})-{\rm Rank}(V^{l'}_{h''}))
   =n\sum_{h\in H}N_h(\uik).$$

We now consider the following cartesian square,
\begin{equation}\label{eq3.10}
 \xymatrix{
  \widetilde{\mathcal{F}}^{f}_{\uik} \ar[r]^{p_2} \ar[d]^b & \mathcal{F}^{f}_{ \uik}\ar[d]^e\\
  \widetilde{\mathcal{F}}_R^k \ar[r]^-{p_2'} & \mathcal{F}_{V_0, \uik}^k.}
\end{equation}
Since $p_2'$ is a vector bundle with rank $n\sum_{h\in H}N_h(\uik)$,
the second projection map $p_2$ is a vector bundle with rank $n\sum_{h\in H}N_h(\uik)$.
 The smoothness and irreducibility follow Proposition \ref{prop3.1}. This proves the first statement. The second statement follows from the first one.

{\rm (b)} Consider cartesian square (\ref{eq3.10}). By Proposition \ref{prop3.1}, the evaluation map $e$ is a vector bundle, then $b$ is also a vector bundle. Now consider the following commutative diagram
\begin{equation}\label{eq3.10.1}
 \xymatrix{ \widetilde{\mathcal{F}}^{f}_{\uik} \ar[r]^{b} \ar[dr]_{\pi_{ \uik}^f} &
 \widetilde{\mathcal{F}}_R^k \ar[d]^{p_1} \\ &  E_V^R}.
\end{equation}
Here  the first projection map $p_1$ is a proper map.
By the property in Section \ref{decomposition}(b), $(\pi_{ \uik}^f)_!{\bf 1}=p_{1!}b_!{\bf 1}$ is semisimple. Proposition follows.
\end{proof}

 Denote $\widetilde{L}^f_{V, \uik}=(\pi^f_{ \uik})_!{\bf 1} \in
\mathcal{D}(E_V)$, which is semisimple by Proposition \ref{prop3.2}.
Let
$$\textstyle L^f_{V, \uik}=\widetilde{L}^f_
{V, \uik}[d(\uik)+(n-1)\sum_{i\in I}N_i(\uik)],$$
where $d(\uik)$ is defined in Proposition \ref{prop3.2}(a).

Similarly, let $\pi^R_{\uik}:
\widetilde{\mathcal{F}}^{R}_{\uik}
\rightarrow E_V^R$ be the first projection map. We define
$\widetilde{L}^R_{V, \uik}=
(\pi^R_{\uik})_!{\bf 1}$. %_{\widetilde{\mathcal{F}}^{R}_{\uik}}$.

 Let $\mathcal{P}^f_V$ (resp. $\mathcal{P}^R_V$) be the full subcategory of
$\mathcal{M}(E_V^R)$ consisting of  direct
sums of the simple perverse sheaves $L$ which are the direct summands
of $\widetilde{L}^f_{V, \uik}$ (resp.
$\widetilde{L}^R_{V, \uik}$) up to shift for
some $(\uik) \in (I \times \mathbb{N})^m$.
Let $\mathcal{Q}^f_V$ (resp.
$\mathcal{Q}^R_V$) be the full subcategory of $\mathcal{D}(E_V^R)$
whose objects are isomorphic to finite direct sums of $L[d]$ for
various simple perverse sheaves $L$ in $\mathcal{P}^f_V$ (resp.
 $\mathcal{P}^R_V$) and various $d \in
\mathbb{Z}$.

\subsection{Restriction functor}\label{res}
By abuse of notations, from now on, we write $E_V$ (resp. $G_V$) instead of $E_V^R$ (resp. $G_V^R$) unless we specify.
Let $W$ be an $I$-graded free
$R$-submodule of $V$ such that $T=V/W$ is also a free $R$-module.
Let $F=\{x\in E_V|x(W)\subset W\}$. Denote $E_{T,W}=E_T\times E_W$ and $G_{T,W}=G_T\times G_W$.
Consider the following diagram
\begin{equation}\label{eqres}
\xymatrix{ E_{T, W} & F \ar[l]_-{\kappa} \ar[r]^{\iota} & E_V,}
\end{equation}
where
$\iota$ is an embedding and $\kappa(x)=(x_W,x_T)$. Here $x_W = x|_W$ and $x_T$ is the induced map $\overline{x} : V/W \rightarrow  V/W$. For any $B\in \mathcal{D}(E_V)$, we define
$$\overline{\Res}^V_{T,W}B=\kappa_! \iota ^*B.$$
 We note that, in general,
$\overline{\Res}^V_{T,W}B \not \in \mathcal{Q}^f_{T,W}$, even for $ B \in \mathcal{Q}^f_V$,
where $\mcal Q^f_{T,W}$ is defined similarly as $\mcal Q_V^f$ replacing $E_V$ by $E_{T,W}$.
In fact, given a free $R$-flag
$\frak{f}=(V^0 \supset V^1 \supset \cdots \supset V^m=0)$, and $W \subset
V,\  T=V/W$ being free $R$-modules, the induced flags
\begin{equation}\label{eq3.11.1}
\frak{f}_T:=((V^0+W)/W
\supset (V^1+W)/W \supset \cdots \supset (V^m+W)/W=0)
\end{equation}
\begin{equation}\label{eq3.11.2}
 \frak{f}_W:=(V^0
\cap W \supset V^1 \cap W \cdots \supset V^m \cap W=0)
\end{equation}
are
no longer free $R$-flags, since $V^l \cap W$ and $(V^l +W)/W$ are no longer free $R$-modules in general.

\begin{lem}\label{lem3.3}
  $\overline{\Res}^V_{T,W}(B)$ is semisimple in $\mathcal{D}^b_{G_{T, W}}(E_{T,W})$ for all $B \in \mathcal{Q}^f_V$.
\end{lem}
\begin{proof}
It is sufficient to prove that $\kappa_!\iota^*(B)$ is
semisimple for a simple perverse sheaf $B\in \mcal Q_V^f$.
Let $\iota': E_{T,W} \hookrightarrow F$ be the natural embedding map.
Then $(\iota')^!\iota^*: \mcal D_{G_V}(E_V)\rightarrow \mcal D_{G_{T,W}}(E_{T,W})$
is the hyperbolic localization functor defined in \cite{Braden}.
By (1) in \cite{Braden}, we have
\[\overline{\Res}^V_{T,W}(B)=\kappa_!\iota^*(B)\simeq (\iota')^!\iota^*(B).\]
By Theorem 8 in \cite{Braden}, $\overline{\Res}^V_{T,W}$ preserves purity.
Since $B$ is pure, $\overline{\Res}^V_{T,W}(B)$ is also pure.
Therefore, by the property in Section \ref{decomposition}(a), $\overline{\Res}^V_{T,W}(B)$ is semisimple.
\end{proof}

Since the objects in $\mathcal{Q}^R_V$ are semisimple complexes, every object $A \in \mathcal{Q}^R_V$ can be uniquely written into $A=A^f \oplus A^t$ such that $A^f \in \mathcal{Q}^f_V$, $A^t \in \mathcal{Q}^R_V \setminus \mathcal{Q}^f_V$ and $A^f$ is the maximal subobject of $A$ which is in $ \mathcal{Q}^f_V$. Therefore we can define a projection functor
$P_f: \mathcal{Q}^R_V \rightarrow \mathcal{Q}^f_V$ sending $A$ to $A^f$. %, \ P_f(A)=A^f$.

\begin{defn}
 $\widetilde{\Res}^V_{T,W}(B):=P_f(\overline{\Res}^V_{T,W}(B))$.
\end{defn}

\begin{prop}\label{prop3.3}
  $\widetilde{\Res}^V_{T,W}(B) \in \mathcal{Q}^f_{T,W}$ if $B \in
  \mathcal{Q}_V^f$.
\end{prop}
\begin{proof}
This follows directly from the definition of
$\widetilde{\Res}^V_{T,W}$.
\end{proof}

\subsection{Induction functor}\label{ind}

Recall that $W$ is a free $R$-submodule of $V$ such that $T=V/W$ is also a free $R$-module.
 Let $P$
be the stabilizer of $W$ in $G_V$ and $U$ be the unipotent radical of
$P$.
 Consider the following diagram:
 \begin{equation}\label{eq3.12.1}
\xymatrix{ E_{T, W} & G_V \times^U F \ar[l]_-{p_1}\ar[r]^-{p_2}&
G_V \times^P F \ar[r]^-{p_3} &E_V.}
 \end{equation}
Here $p_1(g,x)=\kappa(x),\
p_2(g,x)=(g,x)$, and $p_3(g,x)=g(\iota(x))$, where $\kappa$ and $\iota$
are the maps in Diagram (\ref{eqres}). For any $A \in
\mathcal{D}_{G_{T, W}}(E_{T, W})$, we define
$$\widetilde{\Ind}^V_{T,W}A=p_{3!}p_{2\flat}p^*_1A.$$
Here $p_{2\flat}$ is
well defined since $p_2$ is a principle $G_{T, W}$-bundle.

\begin{prop}\label{prop3.5} $\widetilde{\Ind}^V_{T,W}A \in \mathcal{Q}^f_V$ if $A \in
\mathcal{Q}^f_{T,W}$.
\end{prop}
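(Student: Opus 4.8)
The plan is to reduce the statement to the analogous fact in Lusztig's setting by exhibiting $\widetilde{\Ind}^V_{T,W}$ as built out of proper maps and vector bundles, so that Proposition \ref{decomposition}(3) applies. First I would reduce to the case of a single generator: any simple $A \in \mathcal{Q}^f_{T,W}$ is, up to shift, a direct summand of $\widetilde{L}^f_{T,\underline{i}_1,\underline{k}_1} \boxtimes \widetilde{L}^f_{W,\underline{i}_2,\underline{k}_2}$ for suitable $(\underline{i}_1,\underline{k}_1)$ and $(\underline{i}_2,\underline{k}_2)$, so it suffices to prove that $\widetilde{\Ind}^V_{T,W}(\widetilde{L}^f_{T,\underline{i}_1,\underline{k}_1} \boxtimes \widetilde{L}^f_{W,\underline{i}_2,\underline{k}_2})$ lies in $\mathcal{Q}^f_V$; since taking direct summands of a semisimple complex stays inside $\mathcal{P}^f_V$ by the very definition of that category together with Proposition \ref{decomposition}(3), this reduction is harmless.

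Second, I would identify the iterated flag picture. Concatenating a free flag of $W$ of type $(\underline{i}_2,\underline{k}_2)$, the module $W$ itself inside $V$, and a free flag of $T=V/W$ of type $(\underline{i}_1,\underline{k}_1)$ produces a free flag of $V$ of type $(\underline{i},\underline{k}) := ((\underline{i}_2,\underline{k}_2),(\underline{i}_1,\underline{k}_1))$ (the concatenation). The standard diagram-chase—exactly as in Lusztig's \cite{L2}, adapted by the factor $n$ as in Remark \ref{rem3.3}—shows that $\widetilde{\Ind}^V_{T,W}(\widetilde{L}^f_{T,\underline{i}_1,\underline{k}_1} \boxtimes \widetilde{L}^f_{W,\underline{i}_2,\underline{k}_2})$ is, up to a shift, nothing but $\widetilde{L}^f_{V,\underline{i},\underline{k}}=(\pi^f_{V,\underline{i},\underline{k}})_!{\bf 1}$. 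Concretely, using $p_2$ a principal bundle and the base-change/projection-formula machinery of Sections \ref{sec2.2.3}--\ref{sec2.2.5}, one checks that $p_{3!}p_{2\flat}p_1^*$ applied to the external product of the two small complexes factors through the space $G_V \times^P \widetilde{F}$ where $\widetilde F$ is the variety of compatible pairs (free flag of $V$ refining the fixed $W$, an $x$-stable point). One then observes this space maps to $\widetilde{\mathcal{F}}^{Rf}_{V,\underline{i},\underline{k}}$ and the relevant composite to $E_V$ is $\pi^f_{V,\underline{i},\underline{k}}$ up to a vector-bundle factor coming from $U$ and the non-free intermediate subquotients; here the $n$-fold scaling of dimensions from Proposition \ref{prop3.1} and Proposition \ref{prop3.2} enters.

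Third, having identified $\widetilde{\Ind}^V_{T,W}(\widetilde{L}^f_{T,\underline{i}_1,\underline{k}_1} \boxtimes \widetilde{L}^f_{W,\underline{i}_2,\underline{k}_2})$ with a shift of $\widetilde{L}^f_{V,\underline{i},\underline{k}}$, it follows from Proposition \ref{prop3.2}(2) that it is semisimple and, by the definition of $\mathcal{P}^f_V$ and $\mathcal{Q}^f_V$, that every summand lies in $\mathcal{Q}^f_V$; tensoring back by the shifts that were stripped off in the reduction keeps us in $\mathcal{Q}^f_V$, which is closed under shifts by construction. This finishes the argument.

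The main obstacle I expect is the bookkeeping in the second step: one must carefully track the flag variety $\mathcal{F}^{Rf}$ versus $\mathcal{F}^{RNf}$ and the fact (noted in Section \ref{res}) that the intermediate subquotients $V^l \cap W$ and $(V^l+W)/W$ need \emph{not} be free. Unlike the restriction functor—where this non-freeness genuinely breaks things and forces the projection $P_f$—for induction the direction is favorable: we build a free flag of $V$ from free data, the intermediate non-free pieces only contribute a vector-bundle factor (as in Proposition \ref{prop3.2}), and $\mathcal{F}^{Rf}_{V,\underline{i},\underline{k}}$ being \emph{open} (Proposition \ref{prop3.1}(2)) rather than closed is exactly what makes the decomposition-theorem hypothesis of Proposition \ref{decomposition}(3) available after stratifying. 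So the real work is setting up the diagram so that the composite factors visibly as (vector bundle) followed by (proper map) over a smooth source, with all shifts accounted for by the $n$-adjusted dimension formulas.
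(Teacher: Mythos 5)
Your proposal is essentially the paper's proof: reduce by additivity to $A=\widetilde{L}^f_{T,\underline{i}',\underline{k}'}\boxtimes\widetilde{L}^f_{W,\underline{i}'',\underline{k}''}$, then show via the Lusztig-style commutative diagram (the paper's squares $\fbox{1}$, $\fbox{2}$) that $\widetilde{\Ind}^V_{T,W}(A)=\widetilde{L}^f_{V,\underline{i},\underline{k}}$ with $(\underline{i},\underline{k})=((\underline{i}',\underline{k}'),(\underline{i}'',\underline{k}''))$, which lies in $\mathcal{Q}^f_V$ by definition and Proposition \ref{prop3.2}. Three small slips to fix: the concatenation must put the $T$-type first and then the $W$-type (the flag of $V$ starts at the top, first quotients to $T$, then drops through $W$), not the other order you wrote; the identification is exact, with no residual shift or extra vector-bundle factor, because the diagram identifies $G_V\times^P\widetilde{\mathcal{F}}^0_{V,\underline{i},\underline{k}}$ with $\widetilde{\mathcal{F}}^{Rf}_{V,\underline{i},\underline{k}}$ on the nose; and the non-free intersections $V^l\cap W$, $(V^l+W)/W$ you worry about arise only for $\overline{\Res}$, never for $\widetilde{\Ind}$, since here the flag of $V$ is assembled from given free flags of $T$ and $W$, so every term is automatically free by construction and no stratification by $\mathcal{F}^{Rf}$ versus $\mathcal{F}^{RNf}$ is needed in this proof.
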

\begin{proof}
The proof is the same as that for Lemma 9.3.2 in \cite{L5}.
For later use, we present here.
Since  $\widetilde{\Ind}^V_{T,W}$ is additive, it is enough to prove the proposition for $A=\widetilde{L}_{T, \ui', \uk'}^f \boxtimes \widetilde{L}^f_{W, \ui'', \uk''}$, where
$(\ui', \uk')=((i'_1,k'_1)\cdots (i'_{m},k'_m))$ and
$(\ui'', \uk'')=((i''_1,k''_1)\cdots (i''_{l},k''_l))$.
For such $(\ui', \uk')$ and $(\ui'', \uk'')$, we denote
\begin{equation}\label{eqik}
(\ui'\ui'', \uk' \uk'')=( (i'_1,k'_1)\cdots (i'_{m},k'_m),(i''_1,k''_1)\cdots (i''_{l},k''_l)).
\end{equation}
We simple write $(\uik)= (\ui'\ui'', \uk' \uk'')$.
 Let
$$\mathcal{F}^0_{\uik}=\left\{ (V^0 \supset
V^1 \supset \cdots \supset V^m \cdots \supset V^{m+s}=0) \in
\mathcal{F}_{\uik}^f\ |\ V^m=W \right\}$$
 and
$\widetilde{\mathcal{F}}^0_{\uik}=
\widetilde{\mathcal{F}}_{\uik}^f \bigcap (F
\times \mathcal{F}^0_{\uik})$.
Now consider the following diagram,
\begin{equation}\label{eq3.13}
\xymatrix{
\widetilde{\mathcal{F}}^f_{T,\ui', \uk'}\times
\widetilde{\mathcal{F}}^f_{W,\ui'', \uk''} \ar[d]_-{\pi^f_{T,W}} \ar @{} [dr] |{\fbox{1}}& G_V
\times^U\widetilde{\mathcal{F}}^0_{\uik} \ar[l]_-{\widetilde{p}_1} \ar[d]_-{u'} \ar[r]^-{\widetilde{p}_2} \ar @{} [dr] |{\fbox{2}}&
 G_V
\times^P\widetilde{\mathcal{F}}^0_{\uik} \ar[r]^-i \ar[d]^-u &
\widetilde{\mathcal{F}}^f_{\uik} \ar[d]^{\pi^f_{\uik}}
\\
E_{T, W} &
 G_V \times^UF \ar[l]_-{p_1} \ar[r]^-{p_2} & G_V \times^PF
\ar[r]^-{p_3} & E_V.
}
\end{equation}

Here the vertical maps are all projection maps and $i$ is an
identity map. The squares $\fbox{1}$ and $\fbox{2}$ are both
cartesian squares and $\widetilde{p}_2$ is a principle  $G_{T,W}$-bundle. It follows that
$$p^*_1
(\pi^f_{T,W})_!{\bf 1}=u'_!\widetilde{p}_1^*{\bf 1}=u'_!\widetilde{p}_2^*{\bf 1}=p^*_2
u_!{\bf 1}.$$ So
$$p_{3!}p_{2\flat}p^*_1A=p_{3!}p_{2\flat}p^*_1
(\pi^f_{T,W})_!{\bf 1}=p_{3!}u_!{\bf 1}=(\pi^f_{\uik})_!{\bf 1} \in \mathcal{Q}^f_V.$$
\end{proof}
\begin{rem}\label{rem3.6}
From the above proof, we have $\widetilde{\Ind}^V_{T,W}
(\widetilde{L}^f_{T, \ui', \uk'} \boxtimes
\widetilde{L}^f_{W, \ui'', \uk''})=\widetilde{L}^f_{V, \ui'\ui'', \uk'\uk''}$.
\end{rem}

\begin{lem}\label{lem3.3.1}
 Let $b: Y \rightarrow X$ be a fiber bundle with $d$ dimensional connected smooth irreducible fiber. %of relative dimension $d$.
 % Let $b: Y \rightarrow X$ be a vector bundle with rank $d$ on algebraic $k$-variety $X$.
  If $B=b^* A$ for some $A \in \mathcal{D}^b(X)$, then
  $\mathbb{D}b_!B=(b_!\mathbb{D}B)[2d]$.
\end{lem}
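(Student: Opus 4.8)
The plan is to exploit the projection/base-change identity for fiber bundles, just as in Lusztig's treatment of the analogous statement (cf. \cite{L5}, Chapter 8). First I would use the hypothesis $B = b^*A$ together with the projection formula from Section \ref{sec2.2.5}: since $b^! \simeq b^*[2d]$ for a fiber bundle with $d$-dimensional fiber (this is the smooth-pullback property, as the fibers are connected smooth irreducible and $b$ is a fiber bundle, hence smooth of relative dimension $d$), one has $b_!b^*A \simeq b_!b^!A[-2d] \simeq A \otimes b_!{\bf 1}[-2d]$ by the projection formula applied to ${\bf 1} = b^*{\bf 1}_X$. Actually it is cleaner to write $b_!B = b_!b^*A \simeq A \otimes b_!{\bf 1}$ directly from $K \otimes f_!C \simeq f_!(f^*K \otimes C)$ with $C = {\bf 1}$, $K = A$, $f = b$.

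Next I would compute the two Verdier duals. On one side, $\mathbb{D}(b_!B) = \mathbb{D}(A \otimes b_!{\bf 1})$; on the other side, $b_!\mathbb{D}B = b_!\mathbb{D}b^*A = b_!b^!\mathbb{D}A$, using $\mathbb{D}b^* = b^!\mathbb{D}$ from the pull-back properties in Section 2.2.2(1). Applying the projection formula again with $C = {\bf 1}$ and the identity $b^! = b^*[2d]$, we get $b_!b^!\mathbb{D}A \simeq b_!(b^*\mathbb{D}A)[2d] \simeq (\mathbb{D}A \otimes b_!{\bf 1})[2d]$. So the claim $\mathbb{D}b_!B = (b_!\mathbb{D}B)[2d]$ reduces to the identity
\[
\mathbb{D}(A \otimes b_!{\bf 1}) \simeq (\mathbb{D}A \otimes b_!{\bf 1})[4d].
\]
This in turn should follow from the fact that $b_!{\bf 1}$ for a fiber bundle with $d$-dimensional smooth irreducible fiber is (up to the dualizing complex of the fiber) a "self-dual" object in the appropriate sense: concretely, $\mathbb{D}_X(b_!{\bf 1}_Y) = b_*\mathbb{D}_Y{\bf 1}_Y = b_*\dot\omega_Y$, and since $b$ is smooth of relative dimension $d$ one has $\dot\omega_Y = b^!\dot\omega_X = b^*\dot\omega_X[2d]$, so $b_*\dot\omega_Y = \mathbb{D}_X(b_!{\bf 1})$, while $b_!{\bf 1}_Y = b_!b^*{\bf 1}_X$; combining with the projection-formula expression $b_*\dot\omega_Y \simeq \dot\omega_X \otimes b_!{\bf 1}_X[-2d]$ (using properness is not available, so one must instead use that $b_!$ and $b_*$ agree up to the connectivity/smoothness of the fiber, or decompose $b$ into a composition of an affine bundle and a proper map as is done throughout Section 4) yields the desired compatibility. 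Then one finishes by a general lemma: $\mathbb{D}(A \otimes L) \simeq \mathbb{D}A \otimes \mathbb{D}L \otimes \dot\omega_X^{-1}$-type manipulations specialized to $L$ a shifted local system, which for $L = b_!{\bf 1}$ (which is, fiberwise, the cohomology of the smooth irreducible fiber, concentrated appropriately) gives precisely the shift by $4d$.

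The main obstacle I anticipate is exactly the point where Lusztig can invoke properness but here $b$ is only a fiber bundle (the fibers need not be proper — indeed this is the recurring difficulty of the whole paper). So I would not try to push $\mathbb{D}$ through $b_!$ naively; instead I would either (i) reduce to the case where $b$ is a vector bundle (the only fiber bundles actually used — see Propositions \ref{prop3.1} and \ref{prop3.2}), for which $b_!{\bf 1}_Y = {\bf 1}_X[-2d]$ by the projection formula plus $H^*_c$ of affine space, making the identity $\mathbb{D}(A \otimes {\bf 1}[-2d]) = \mathbb{D}(A)[2d] = (\mathbb{D}A[-2d])[4d] = (b_!\mathbb{D}b^*A)[2d]$ immediate; or (ii) keep the general "$d$-dimensional connected smooth irreducible fiber" hypothesis and argue that $\mathbb{D}_X b_! b^* A \simeq b_* b^! \mathbb{D}_X A$ always (adjunction plus $\mathbb{D}b^* = b^!\mathbb{D}$, $\mathbb{D}b_! = b_*\mathbb{D}$ with $b_* = $ right adjoint), and then convert $b_*$ back to $b_!$ at the cost of the shift $[2d]$ using the smoothness-with-connected-fibers comparison (the same mechanism underlying $f_\flat$ in Section 2.3.3). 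I expect route (i) suffices for all applications in the paper, so I would state the proof in the general form but remark that in practice $b$ is a vector bundle and the computation collapses to the one-line argument above.
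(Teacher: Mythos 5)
Your route~(i) is exactly the paper's argument. The paper writes $b_!B = b_!b^*A = A[-2d]$ (which is precisely the identity $b_!{\bf 1}\simeq {\bf 1}[-2d]$ plus the projection formula, valid because $b$ is a vector bundle with fiber $\mathbb{A}^d$), and then computes $\mathbb{D}b_!B=\mathbb{D}(A[-2d])=(\mathbb{D}A)[2d]$ while $b_!\mathbb{D}B=b_!\mathbb{D}b^*A=b_!b^!(\mathbb{D}A)=b_!b^*(\mathbb{D}A)[2d]=\mathbb{D}A$. That is the one-line computation you call the ``collapse,'' and it is the whole proof.

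The extended discussion before route~(i) does not actually work, and you are right to be suspicious of it. Both the ``self-duality of $b_!{\bf 1}$'' argument and route~(ii) (using $\mathbb{D}b_!=b_*\mathbb{D}$ and then trying to trade $b_*$ for $b_![2d]$) secretly reimport the assumption $b_!{\bf 1}\simeq {\bf 1}[-2d]$; there is no general mechanism converting $b_*$ into $b_![2d]$ for a smooth map with connected fibers. In fact, as literally stated for an arbitrary fiber bundle with $d$-dimensional connected smooth irreducible fiber, the lemma is false: take $b$ a $\mathbb{P}^1$-bundle ($d=1$), which is proper, so $\mathbb{D}b_!=b_!\mathbb{D}$ and the extra shift $[2]$ cannot appear. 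The hypothesis the proof really needs (and all the paper ever uses — every application is to a vector bundle) is $b_!{\bf 1}\simeq{\bf 1}[-2d]$, i.e.\ an affine/vector bundle. So your instinct to present route~(i) as the actual proof and treat the rest as motivation is correct, and it matches the paper.
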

\begin{proof}
 Since $b_!B=b_!b^*A=A[-2d]$, we have
  $$\mathbb{D}b_!B=\mathbb{D}(A[-2d])=(\mathbb{D}A)[2d],$$ and
  $$b_!\mathbb{D}B=b_!\mathbb{D}b^*A=b_!b^!(\mathbb{D}A)=b_!b^*(\mathbb{D}A)[2d]=\mathbb{D}A.$$
  Lemma follows.
\end{proof}
Denote by $d_1$ (resp. $d_2$) the dimension of the fibers of $p_1$
(resp. $p_2$), where $p_1$ and $p_2$ are the maps defined in Diagram (\ref{eq3.12.1}).
After simple calculations, we have $ d_2={\rm dim}P/U$ and $d_1={\rm
dim}G_V/U+n\sum_{h \in H} {\rm Rank}(T_{h'}){\rm Rank}(W_{h''})$.

\begin{prop}\label{prop3.5.1}
Let $A$ be a direct summand of $\widetilde{L}^f_{T, \ui', \uk'} \boxtimes \widetilde{L}^f_{W, \ui'', \uk''}$, then
$$\mathbb{D}(\widetilde{\Ind}^V_{T,W}(A))=\widetilde{\Ind}^V_{T,W}(\mathbb{D}(A))[2d_1-2d_2+2(n-1)\sum_{i\in I}{\rm Rank}(T_i){\rm Rank}(W_i)].$$
\end{prop}

\begin{proof}
  Since $\mathbb{D}$ is additive, it is enough to consider $A=\widetilde{L}^f_{T, \ui', \uk'} \boxtimes \widetilde{L}^f_{W, \ui'', \uk''}$.
From the proof of Proposition \ref{prop3.5}, we have
$$ \mathbb{D}(\widetilde{\Ind}^V_{T,W}(A))=\mathbb{D}(p_{3!}p_{2\flat}p_1^*(\pi^f_{T,W})_!{\bf 1})
  =\mathbb{D}((\pi^f_{\uik})_!i_!\widetilde{p}_{2\flat}\widetilde{p}_1^*{\bf 1}).$$

From the proof of Proposition \ref{prop3.2}, $\pi^f_{\uik}=p\circ b$ such that $p$ is a proper map and $b$ is a vector bundle with rank $(n-1)\sum_{i\in I}N_i(\uik)$. By Lemma \ref{lem3.3.1},%\vspace{4pt}
\begin{eqnarray*}
&& \mathbb{D}((\pi^f_{\uik})_!i_!\widetilde{p}_{2\flat}\widetilde{p}_1^*{\bf 1})
=p_!b_!i_!\mathbb{D}\widetilde{p}_{2\flat}\widetilde{p}_1^*{\bf 1}[2(n-1)\sum_{i\in I}N_i(\uik)]\\%$\vspace{4pt}
&=&(\pi^f_{\uik})_!i_!\widetilde{p}_{2\flat}\widetilde{p}_1^*(\mathbb{D}{\bf 1})[2d_1-2d_2+2(n-1)\sum_{i\in I}N_i(\uik)].%$\vspace{4pt}
\end{eqnarray*}
On the other hand, by a similar reason, we have
\begin{eqnarray*}
\widetilde{\Ind}^V_{T,W}(\mathbb{D} A)&=&p_{3!}p_{2\flat}p_1^*(\mathbb{D}(\pi^f_{T,W})_!{\bf 1})\\
&=&p_{3!}p_{2\flat}p_1^*p'_!b'_!(\mathbb{D}{\bf 1})[2(n-1)\sum_{i\in I}(N_i(\ui', \uk')+N_i(\ui'', \uk''))]\\
&=&p_{3!}p_{2\flat}p_1^*(\pi^f_{T,W})_!(\mathbb{D}{\bf 1})[2(n-1)\sum_{i\in I}(N_i(\ui', \uk')+N_i(\ui'', \uk''))]\\
&=&(\pi^f_{\uik})_!i_!\widetilde{p}_{2\flat}\widetilde{p}_1^*(\mathbb{D}{\bf 1})[2(n-1)\sum_{i\in I}(N_i(\ui', \uk')+N_i(\ui'', \uk''))].
\end{eqnarray*}
Here we use a similar decomposition $\pi^f_{T,W}=p'\circ b'$
as we did for $\pi_{\uik}^f$ such that $p'$ is a proper map and $b'$ is a vector bundle with rank $(n-1)\sum_{i\in I}(N_i(\ui', \uk')+N_i(\ui'', \uk''))$.

By the definition of $N_i(\uik)$,
it is straightforward to check that
\begin{equation}\label{eq3.13.1}
N_i(\ui'\ui'', \uk'\uk'')-N_i(\ui', \uk')-N_i(\ui'', \uk'')=\sum_{r,r'} k'_{r}k''_{r'}\delta_{ii_r}\delta_{ij_{r'}}\\
%=\sum_rk_r\delta_{ii_r}\sum_{r'}l_{r'}\delta_{ij_{r'}}\\
={\rm Rank}(T_i){\rm Rank}(W_i).
\end{equation}
Proposition follows.
\end{proof}

Let
\begin{equation}\label{eq3.14}
  \Ind^V_{T,W}A=\widetilde{\Ind}^V_{T,W}A[d_1-d_2+(n-1)\sum_{i\in I}{\rm Rank}(T_i){\rm Rank}(W_i)],\
\end{equation}
\begin{equation}\label{eq3.15}
  \Res^V_{T,W}A=\overline{\Res}^V_{T,W}A[d_1-d_2-2{\rm dim}G_V/P+(n-1)\sum_{i\in I}{\rm Rank}(T_i){\rm Rank}(W_i)].
\end{equation}
Then we have the following corollary.

\begin{cor}\label{cor3.0.1}
  $\mathbb{D}(\Ind^V_{T,W}(A))=\Ind^V_{T,W}(\mathbb{D}(A)).$
\end{cor}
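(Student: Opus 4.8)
The plan is to deduce this immediately from Proposition~\ref{prop3.5.1} together with the definition~(\ref{eq3.14}) of $\Ind^V_{T,W}$ as a shift of $\widetilde{\Ind}^V_{T,W}$, using only the elementary fact that Verdier duality anticommutes with the shift functor: $\mathbb{D}(K[j])=(\mathbb{D}K)[-j]$ for all $K$ and all $j\in\mathbb{Z}$. First, since $\mathbb{D}$, $\Ind^V_{T,W}$ and $\widetilde{\Ind}^V_{T,W}$ are additive, and every object of $\mathcal{Q}^f_{T,W}$ is a finite direct sum of shifts of direct summands of complexes $\widetilde{L}_{T,\underline{i},\underline{k}}\boxtimes\widetilde{L}_{W,\underline{j},\underline{l}}$, it suffices to treat the case where $A$ is such a summand, which is exactly the hypothesis of Proposition~\ref{prop3.5.1}.

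Write $c:=d_1-d_2+(n-1)\sum_i{\rm Rank}(T_i){\rm Rank}(W_i)$, so that $\Ind^V_{T,W}(B)=\widetilde{\Ind}^V_{T,W}(B)[c]$ by~(\ref{eq3.14}) for any admissible $B$. Then the computation runs
\begin{align*}
\mathbb{D}(\Ind^V_{T,W}(A)) &= \mathbb{D}\bigl(\widetilde{\Ind}^V_{T,W}(A)[c]\bigr) = \mathbb{D}\bigl(\widetilde{\Ind}^V_{T,W}(A)\bigr)[-c]\\
&= \widetilde{\Ind}^V_{T,W}(\mathbb{D}A)[2c][-c] = \widetilde{\Ind}^V_{T,W}(\mathbb{D}A)[c] = \Ind^V_{T,W}(\mathbb{D}A),
\end{align*}
where the third equality is Proposition~\ref{prop3.5.1}, noting that $2d_1-2d_2+2(n-1)\sum_i{\rm Rank}(T_i){\rm Rank}(W_i)=2c$, and the last equality is again~(\ref{eq3.14}), this time applied to $\mathbb{D}A$. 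For this last step one should note that $\mathbb{D}A\in\mathcal{Q}^f_{T,W}$, which holds because Verdier duality preserves semisimplicity and, by Proposition~\ref{prop3.2.1} applied on each factor $E_T^R$ and $E_W^R$, fixes the relevant perverse sheaves (up to shift), so the right-hand side is well defined and subject to the same additive reduction.

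There is no genuine obstacle here: the whole content of the corollary is the bookkeeping observation that the normalizing shift introduced in~(\ref{eq3.14}) is precisely half of the correction shift occurring in Proposition~\ref{prop3.5.1}, so that upon dualizing the two shifts cancel exactly. The only place requiring a moment's care is matching the shift degrees and confirming that $2c$ is indeed the exponent appearing in Proposition~\ref{prop3.5.1}, together with the remark above that $\mathbb{D}$ stabilizes the category $\mathcal{Q}^f_{T,W}$ so that both sides of the asserted identity lie in the same category.
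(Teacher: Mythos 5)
Your proof is correct and follows exactly the same route as the paper: apply the definition~(\ref{eq3.14}) to write $\Ind^V_{T,W}$ as a shift of $\widetilde{\Ind}^V_{T,W}$, use $\mathbb{D}(K[c])=(\mathbb{D}K)[-c]$, invoke Proposition~\ref{prop3.5.1}, and observe that the correction shift there is exactly twice the normalizing shift $c$, so the shifts cancel. The only differences are cosmetic additions of rigor (the additive reduction to summands of $\widetilde{L}_{T,\underline{i},\underline{k}}\boxtimes\widetilde{L}_{W,\underline{j},\underline{l}}$ so that Proposition~\ref{prop3.5.1} applies, and the check via Proposition~\ref{prop3.2.1} that $\mathbb{D}$ stabilizes $\mathcal{Q}^f_{T,W}$) which the paper leaves implicit.
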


\begin{proof}
This follows directly from Proposition \ref{prop3.5.1}.
\end{proof}

\begin{cor}\label{cor3.1}
$\Ind^V_{T,W}(L^f_{T,\ui', \uk'} \boxtimes
L^f_{W,\ui'', \uk''})=L^f_{V, \ui'\ui'', \uk' \uk''}$,
where $(\ui'\ui'', \uk' \uk'')$ is defined in (\ref{eqik}).
\end{cor}
\begin{proof}
Corollary follows from (\ref{eq3.14}) and Remark \ref{rem3.6}.
\end{proof}

\subsection{Bilinear form}\label{section4.4}

Recall that, for any $G$-equivariant semisimple complexes $A,B$ on algebraic variety $X$,
Lusztig defines an integer number
$d_j(X, G; A, B)$ for any $j\in \mbb Z$ in \cite[Section 1]{L9}.
For later use, we list some properties as follows.
We refer to \cite{ L5} for more details.
\begin{itemize} \item[(a)]$d_j(X,G;A,B)=d_j(X,G;B,A)$.

\item[(b)] $d_j(X,G;A[n],B[m])=d_{j+n+m}(X,G;A,B)$ for any $m,n \in \mathbb{Z}$.

\item[(c)] $d_j(X,G;A\oplus A',B)=d_j(X,G;A,B)+d_j(X,G;A',B)$.

\item[(d)] If $A$ and $B$ are perverse sheaves, then
$d_j(X,G;A,B)=0$ for all $j>0$. If, in addition, $A$ and $B$ are
simple and $B \simeq \mathbb{D}A$
, then
$d_0(X,G;A,B)$ is $1$ and is zero otherwise.
\end{itemize}

\begin{lem}[\cite{GL}]\label{lem3.4} Let $A \in \mathcal{Q}^f_{T,W}$ and $B \in \mathcal{Q}^f_V$. Then for any $j \in \mathbb{Z},$
$$d_j(E_{T, W},G_{T, W};A,\overline{\Res}^V_{T,W}B)=d_{j'}(E_V,G_V;\widetilde{\Ind}^V_{T,W}A,B),$$
where $j'=j+2{\rm dim}G_V/P$.
\end{lem}
\begin{prop}\label{prop3.6}
  Let $A \in \mathcal{Q}^f_{T,W}$ and $B \in \mathcal{Q}^f_V$. Then for any $j \in \mathbb{Z},$
$$d_j(E_{T, W},G_{T, W}; A,\Res^V_{T,W}B)=d_{j}(E_V,G_V;\Ind^V_{T,W}A,B).$$
\end{prop}
\begin{proof} This follows directly from definitions (\ref{eq3.14}), (\ref{eq3.15}) and Lemma
\ref{lem3.4}.
\end{proof}

\subsection{Fourier-Deligne transform}

Let us now
consider a new orientation of the given quiver. Denote the source of
the arrow $h$ by ${}'\!h$ and its target by ${}''\!h$ for the
new orientation. Recall that we denote the source of
the arrow $h$ by $h'$ and its target by $h''$ for the
old orientation. Let $H_1=
\left\{ h \in H \mid {}'\!h=h', {}''\!h=h'' \right\}$ and $H_2= \left\{
h \in H \mid {}'\!h=h'',{}''\!h=h' \right\}$. For a given $I$-graded
free $R$-module $V$, denote
$$E_V=\oplus_{h \in H_1} \Hom_R(V_{h'},V_{h''}) \oplus \oplus_{h \in
H_2} \Hom_R(V_{h'},V_{h''}),$$
$${}'\!E_V=\oplus_{h \in H_1} \Hom_R(V_{h'},V_{h''}) \oplus \oplus_{h \in
H_2} \Hom_R(V_{h''},V_{h'}),$$
$$\dot{E}_V=\oplus_{h \in H_1} \Hom_R(V_{h'},V_{h''}) \oplus
\oplus_{h \in H_2} \Hom_R(V_{h'},V_{h''}) \oplus \oplus_{h \in H_2}
\Hom_R(V_{h''},V_{h'}).$$ Then we have the natural projection maps
\begin{equation}\label{eqFD}
  \xymatrix{E_V & \dot{E}_V \ar[l]_-{s}\ar[r]^-{t}& {}'\!E_V.}
\end{equation}

Consider $\Hom_R(V_{h'}, V_{h''})$ as a subset of $\Hom_k(V_{h'}, V_{h''})$, then we define a
map $\mathcal{T}_V: \dot{E}_V  \rightarrow k $ by
\begin{equation}\label{eq3.16.1}
\mathcal{T}_V(a,b,c)=\sum_{h \in H_2}  tr(V_{h'}
\xrightarrow{b} V_{h''} \xrightarrow{c} V_{h'}),
\end{equation}
 where $tr$ is the
trace function of the endomorphism of $k$-vector spaces. Clearly, $\mathcal{T}_V$ is
a bilinear map.

Define
\begin{align*}
  \Phi: \mathcal{D}(E_V) &\rightarrow
\mathcal{D}({}'\!E_V)\\
 A &\mapsto t_!(s^*(A) \otimes
\mcal L_{\mathcal{T}_V})[d_V],
\end{align*} where $d_V={\rm dim} (\oplus_{h \in H_2}
\Hom_R(V_{h'}, V_{h''}))=n\sum_{h \in H_2}{\rm Rank}(V_{h'}){\rm Rank}(V_{h''})$
and $\mcal L_{\mathcal{T}_V}$ is a rank 1 local system on $\dot E_V$ defined in ~\cite[Section 8.1.11]{L5}.

Similarly, we have the projection maps
\begin{equation}\label{eqFD2}
\xymatrix{E_T \times E_W & \dot{E}_T \times
\dot{E}_W \ar[l]_-{\overline{s}}\ar[r]^-{\overline{t}}& {}'\!E_T \times {}'\!E_W.}
\end{equation}
 Define $\overline{\mathcal{T}}: \dot{E}_T \times \dot{E}_W  \rightarrow k$
by $\overline{\mathcal{T}}:=\mathcal{T}_T+\mathcal{T}_W$,
where
$\mathcal{T}_T: \dot{E}_T \rightarrow k$ (resp. $\mathcal{T}_W:
\dot{E}_W \rightarrow k$) is defined in (\ref{eq3.16.1}) replacing $V$ by $T$ (resp. $W$). In a similar
fashion, one can define
\begin{align*}
\Phi: \mathcal{D}(E_T \times E_W)
& \rightarrow \mathcal{D}({}'\!E_T \times {}'\!E_W)\\
A &\mapsto \overline{t}_!(\overline{s}^*(A) \otimes
\mcal L_{\overline{\mathcal{T}}})[d_T+d_W].
\end{align*}

\begin{prop}\label{prop3.7}
For any $B \in \mathcal{Q}^f_V$, we have $$\Phi
\overline{\Res}^V_{T,W}(B)=\overline{\Res}^V_{T,W} \Phi (B)[\pi],$$ where
$\pi=n\Sigma_{h \in H_2}({\rm Rank}(T_{h''}){\rm
Rank}(W_{h'})-{\rm Rank}(T_{h'}){\rm Rank}(W_{h''})).$
\end{prop}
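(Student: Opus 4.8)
### Proof proposal

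The plan is to reduce the statement to the commutativity of Fourier--Deligne transforms with the pullback and proper pushforward functors that make up $\overline{\Res}$, following Lusztig's argument in Section 9.2 of \cite{L5} but carefully tracking the $R$-module structure through the trace forms. Recall from Section \ref{res} that $\overline{\Res}^V_{T,W}(B) = \kappa_! \iota^* B$, where $\iota\colon F \hookrightarrow E_V$ is the inclusion of the subvariety of $x \in E_V$ preserving $W$, and $\kappa\colon F \to E_T \times E_W$ sends $x$ to $(x_T, x_W)$. Both $\Phi$ on $E_V$ and $\Phi$ on $E_T \times E_W$ are built from projections $s,t$ (resp. $\overline s, \overline t$) off the auxiliary spaces $\dot E_V$ (resp. $\dot E_T \times \dot E_W$), a tensor with a Kummer-type local system $L_{\mathcal{T}_V}$ (resp. $L_{\overline{\mathcal T}}$), and a shift. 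So what I need is a diagram relating $F$, $\dot E_V$, ${}'\!E_V$ on the one hand and $F'$ (the analogous subvariety for the new orientation), $\dot E_T \times \dot E_W$, ${}'\!E_T \times {}'\!E_W$ on the other.

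First I would introduce the space $\dot F \subseteq \dot E_V$ consisting of triples $(a,b,c)$ all of whose components preserve $W$, together with the induced maps to $\dot E_T \times \dot E_W$ (via $\kappa$ applied componentwise) and the two forgetful maps to $F$ and to ${}'\!F \subseteq {}'\!E_V$. The key geometric input is that the square
\[
\xymatrix{
\dot F \ar[r] \ar[d] & \dot E_V \ar[d]^{s} \\
F \ar[r]^-{\iota} & E_V
}
\]
is cartesian, and likewise for the $t$-side and for the $\kappa$-side; this is exactly where one uses that $\kappa$ and $\iota$ interact well with direct-sum decompositions of $\Hom$-spaces. Granting these cartesian squares, proper base change (Section 2.2.4) and the projection formula (Section \ref{sec2.2.5}) let me slide $\kappa_!$ and $\iota^*$ past $s^*$, $t_!$, and the tensor with the local system, provided I know how the two trace forms are related. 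This is the crux: I must show that $\overline{\mathcal T} = \mathcal T_T + \mathcal T_W$, pulled back to $\dot F$ via $\kappa$, agrees with $\mathcal T_V$ restricted to $\dot F$. Since $\mathcal T_V(a,b,c) = \sum_{h \in H_2} \mathrm{tr}(cb)$ and $W$ is preserved by $b$ and $c$, the trace of $cb$ on $V_{h'}$ splits as the trace on $W_{h'}$ plus the trace on $(V/W)_{h'} = T_{h'}$; hence $\mathcal T_V|_{\dot F} = \mathcal T_T \circ \kappa + \mathcal T_W \circ \kappa = \overline{\mathcal T}\circ\kappa$, so $\iota^* s^* L_{\mathcal T_V} \simeq (\text{pullback of } L_{\overline{\mathcal T}})$. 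Note this trace identity is genuinely a $k$-vector-space statement, which is why the construction uses $E_V^k$ as the ambient space and the ordinary trace; no subtlety about $R$-linearity enters here beyond the fact that $b,c$ being $R$-linear is compatible with the $R$-submodule $W$.

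Once the local systems match, assembling the base-change and projection-formula identities gives $\Phi \overline{\Res}^V_{T,W}(B) = \overline{\Res}^V_{T,W}\Phi(B)$ up to a shift, and the shift $[\pi]$ is just the discrepancy between the normalizing shifts $[d_V]$ and $[d_T + d_W]$ after the dimensions are restricted to $F$ and $F'$: one computes $d_V = n\sum_{h \in H_2}\mathrm{Rank}(V_{h'})\mathrm{Rank}(V_{h''})$, and splitting $V = W \oplus T$ as graded $R$-modules gives $\mathrm{Rank}(V_i) = \mathrm{Rank}(W_i) + \mathrm{Rank}(T_i)$, so the cross terms $n\sum_{h\in H_2}(\mathrm{Rank}(T_{h'})\mathrm{Rank}(W_{h''}) + \mathrm{Rank}(T_{h''})\mathrm{Rank}(W_{h'}))$ appear, and after also accounting for the asymmetry in how $\dot F \to F$ versus $\dot F \to {}'\!F$ contribute fiber dimensions (for the new orientation the roles of $h'$ and $h''$ are swapped on $H_2$), one is left precisely with $\pi = n\sum_{h\in H_2}(\mathrm{Rank}(T_{h''})\mathrm{Rank}(W_{h'}) - \mathrm{Rank}(T_{h'})\mathrm{Rank}(W_{h''}))$. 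I would verify that the fiber dimensions of the vertical maps in the cartesian squares are all multiples of $n$ (cf. Remark \ref{rem3.3}) so that the bookkeeping parallels Lusztig's exactly with an overall factor of $n$.

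The main obstacle I anticipate is not any single step but making the diagram chase airtight: one has to juggle four auxiliary spaces ($\dot F$, its images, ${}'\!F$, and the restriction of $\dot E_T \times \dot E_W$) and check that every square that needs to be cartesian actually is, and that all the maps are either smooth or proper as required for base change. The trace-splitting identity and the shift computation are mechanical once set up; the real care is in the category-theoretic manipulation of $(\,)_!$, $(\,)^*$, $\otimes L$, and $[\,\cdot\,]$ through this tower of spaces, which is where Lusztig's original proof is also the most delicate.
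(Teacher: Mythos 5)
There is a genuine gap, and it is in the step you identified as the ``key geometric input.'' The square
\[
\xymatrix{\dot F \ar[r] \ar[d] & \dot E_V \ar[d]^{s} \\ F \ar[r]^-{\iota} & E_V}
\]
is \emph{not} cartesian, and neither is its $t$-side analogue. The fiber product of $\iota\colon F \hookrightarrow E_V$ with $s\colon \dot E_V \to E_V$ is $\{(x_1,x_2,x_3) \mid (x_1,x_2) \in F\}$, with no constraint on $x_3$; similarly the fiber product on the $t$-side is $\Xi = \{(y_1,y_2,y_3) \mid (y_1,y_3) \in {}'\!F\}$. Both strictly contain $\dot F = \{(x_1,x_2,x_3) \mid (x_1,x_2)\in F,\ (x_1,x_3)\in {}'\!F\}$. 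Once you acknowledge this, base change and projection formula only move you between $\Xi$ (and the space the paper calls $\psi$) and the target, not down to $\dot F$ where the trace form does split nicely as you observed.

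The real content of the paper's proof (following Lusztig, 10.1.2 of \cite{L5}) is precisely the extra step needed to pass from $\Xi$ to $\dot F$: one quotients $\Xi$ by a subspace $Z$ on which the trace form $\widetilde{\mathcal T}$ vanishes, shows that the image $c\colon \Xi \to \Xi/Z$ has the property that $\widetilde{\mathcal T}$ restricts to a non-constant affine linear function on the fibers of $c$ over the complement $\Xi' = \Xi - \dot\xi(\dot F)$, and then invokes the vanishing $u_!(\mathcal{L}_T) = 0$ for non-constant affine linear $T$ (Section 2.4(2)) to kill the contribution from $\Xi'$ in the open/closed distinguished triangle. This Artin--Schreier cancellation lemma is what reduces integration over $\Xi$ to integration over $\dot F$; without it the two sides of the proposition genuinely differ. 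Your trace-splitting identity on $\dot F$ is correct, and the shift bookkeeping is essentially right, but those are the mechanical parts. The non-cartesian squares and the cancellation argument are where the actual work is, and the proposal passes over them.
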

\begin{proof}
  The proof is the same as that for Proposition 10.1.2 in \cite{L5},
  except the dimension argument.
  By tracking the proof in \cite{L5}, we have
  $$\pi=d_T+d_W-d_V-2m.$$
   Here
  $m$ is the dimension of the vector bundle $\dot q: \dot F\rightarrow \Psi$, where
  $\dot F=\{x\in \dot E_V|s(x)\in F, t(x)\in {}'\!F\}$, ${}'\!F=\{x\in  {}'\!E_V \mid x_h(W_{{}'\!h}) \subset W_{{}''\!h},\
\forall h \in H \}$ and $\Psi$ is the fiber product of maps $\kappa$ in (\ref{eqres}) and $\overline s$ in (\ref{eqFD2}).
For any $(x,y,z)\in \dot F$, $\dot{q}(x,y,z)= ((x,y),(x_W,y_W,z_W),(x_T,y_T,z_T))$, where $x_W$ and $x_T$ are defined in Section \ref{res}.
A direct calculation shows that $$\textstyle m=n\sum_{h \in H_2}{\rm Rank}(T_{h'}){\rm Rank}(W_{h''}).$$
Therefore,
$ \pi=n\sum_{h \in H_2}({\rm Rank}(T_{h''}){\rm
Rank}(W_{h'})-{\rm Rank}(T_{h'}){\rm Rank}(W_{h''})).$
\end{proof}

\begin{lem}\label{lem3.5}
$\Phi(\widetilde{L}_{V,\uik}^f)=\
{}'\!\widetilde{L}_{V,\uik}^f[M]$
for some $M$.
\end{lem}
\begin{proof}
The proof is the same as that for Proposition 10.2.2 in \cite{L5}.
\end{proof}

\begin{cor}\label{cor3.2}$\Phi (\widetilde{\Res}^V_{T,W}(B))=\widetilde{\Res}^V_{T,W}( \Phi
(B))[\pi]$.
\end{cor}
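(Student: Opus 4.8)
The plan is to reduce everything to Proposition~\ref{prop3.7} (the same statement with $\overline{\Res}$ in place of $\widetilde{\Res}$) together with Lemma~\ref{lem3.5}, and then to check that the Fourier--Deligne transform $\Phi$ is compatible with the projection $P_f$. First I would apply $P_f$ to both sides of the identity $\Phi(\overline{\Res}^V_{T,W}(B))=\overline{\Res}^V_{T,W}(\Phi(B))[\pi]$ furnished by Proposition~\ref{prop3.7}. By Lemma~\ref{lem3.3} (applied in the original orientation on the left, and, using $\Phi(B)\in{}'\!\mathcal{Q}^f_V$ from Lemma~\ref{lem3.5}, in the new orientation on the right), both sides are semisimple complexes in the relevant $\mathcal{Q}^R_{T,W}$, so $P_f$ is defined on them exactly as in the definition of $\widetilde{\Res}$; and $P_f$ visibly commutes with the shift functor, since shifting a semisimple complex shifts each simple summand without changing which of them lie in $\mathcal{P}^f_{T,W}$. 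Thus the corollary follows once I establish that $P_f\circ\Phi=\Phi\circ P_f$ on semisimple objects of $\mathcal{Q}^R_{T,W}$: evaluating this at $\overline{\Res}^V_{T,W}(B)$ and unwinding $\widetilde{\Res}=P_f\circ\overline{\Res}$ gives precisely the claimed identity.

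To prove $P_f\circ\Phi=\Phi\circ P_f$, recall that $\Phi$ restricts to an exact equivalence of the categories of perverse sheaves and commutes with shifts and with ${}^p\!H^n$ (Section~\ref{sec2.4}); hence for a semisimple $A=\bigoplus_iL_i[d_i]$ with the $L_i$ simple perverse one has $\Phi(A)=\bigoplus_i\Phi(L_i)[d_i]$ with each $\Phi(L_i)$ again simple perverse, so the desired identity amounts to: for every simple perverse sheaf $L$ on $E_T\times E_W$, $\Phi(L)\in{}'\!\mathcal{P}^f_{T,W}$ if and only if $L\in\mathcal{P}^f_{T,W}$. For the ``only if'' direction I would invoke Lemma~\ref{lem3.5} on each of $E_T$ and $E_W$ separately: because the data defining the Fourier transform on $E_T\times E_W$ (the maps $\overline{s},\overline{t}$ and the local system $\mathcal{L}_{\overline{\mathcal{T}}}$, with $\overline{\mathcal{T}}=\mathcal{T}_T+\mathcal{T}_W$) factors as an external product of the corresponding data on the two factors, $\Phi$ on $E_T\times E_W$ is the external product of the two transforms; thus Lemma~\ref{lem3.5} gives $\Phi(\widetilde{L}^f_{T,\underline{i}',\underline{k}'}\boxtimes\widetilde{L}^f_{W,\underline{i}'',\underline{k}''})=({}'\!\widetilde{L}^f_{T,\underline{i}',\underline{k}'}\boxtimes{}'\!\widetilde{L}^f_{W,\underline{i}'',\underline{k}''})[M']$ for some $M'$, and passing to simple summands while letting the types vary yields $\Phi(\mathcal{P}^f_{T,W})\subseteq{}'\!\mathcal{P}^f_{T,W}$, hence $L\in\mathcal{P}^f_{T,W}\Rightarrow\Phi(L)\in{}'\!\mathcal{P}^f_{T,W}$.

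For the converse, running the same argument with the two orientations interchanged gives $\Phi({}'\!\mathcal{P}^f_{T,W})\subseteq\mathcal{P}^f_{T,W}$, and I would then use the involutivity $\Phi\circ\Phi=j^*$ from Section~\ref{sec2.4}, where $j$ is multiplication by $-1$ on each fibre. Since $j$ is a $G^R_T\times G^R_W$-equivariant automorphism of $E_T\times E_W$ under which each flag variety $\widetilde{\mathcal{F}}^{Rf}$, and therefore each complex $\widetilde{L}^f$, is invariant, $j^*$ permutes $\mathcal{P}^f_{T,W}$; consequently, if $\Phi(L)\in{}'\!\mathcal{P}^f_{T,W}$ then $j^*L=\Phi(\Phi(L))\in\mathcal{P}^f_{T,W}$, whence $L=j^*(j^*L)\in\mathcal{P}^f_{T,W}$. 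This closes the equivalence and finishes the proof. I expect the only genuinely non-formal point to be this last step: one must see not merely that $\Phi$ preserves the ``free part'' $\mathcal{Q}^f$ but that it \emph{reflects} it, which is exactly where the involutivity of the Fourier--Deligne transform and the evident $j^*$-invariance of the flag varieties enter; once that is in place, the rest is bookkeeping with Proposition~\ref{prop3.7}, Lemma~\ref{lem3.5}, and the shift-invariance of $P_f$.
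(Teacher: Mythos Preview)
Your proposal is correct and follows essentially the same strategy as the paper: reduce to Proposition~\ref{prop3.7}, then show that $\Phi$ both preserves and reflects the ``free part'' $\mathcal{Q}^f$ (equivalently, commutes with $P_f$), using Lemma~\ref{lem3.5} for preservation and the involutivity of $\Phi$ for reflection. Your treatment is in fact slightly more careful than the paper's in two places: you explicitly handle the external-product structure on $E_T\times E_W$ (the paper just cites Lemma~\ref{lem3.5} for $V$), and you track the $j^*$ in $\Phi\circ\Phi=j^*$ and verify that $j^*$ preserves $\mathcal{P}^f$, whereas the paper simply writes $\Phi(\Phi(K))=K$.
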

\begin{proof} From Lemma \ref{lem3.5},
 $\Phi(\mathcal{Q}_V^f) \subset {}'\!\mathcal{Q}_V^f$, where ${}'\!\mathcal{Q}_V^f$ is defined similarly as $\mathcal{Q}_V^f$ for ${}'\!E_V$.
 By the same argument, $\Phi({}'\!\mathcal{Q}_V^f) \subset \mathcal{Q}_V^f$.
Since $\Phi(\Phi(K))=K$ (see 10.2.3 in
 \cite{L5}), for any $ K \in \mathcal{Q}_V^R \setminus \mathcal{Q}_V^{f}$,
  if $\Phi(K) \in{}'\!\mathcal{Q}_V^{f}$, then $K=\Phi(\Phi(K)) \in \mathcal{Q}_V^{f}$. This is a contradiction. Therefore, $\Phi(K) \not \in {}'\!\mathcal{Q}_V^{f} $ for any $K \in \mathcal{Q}_V^R \setminus \mathcal{Q}_V^{f}$.

By definition of $\widetilde{\Res}^V_{T,W}$, the corollary follows
from Proposition \ref{prop3.7}.
\end{proof}

\begin{cor}\label{cor3.4}
{\rm (a)}  $\Phi (\Res^V_{T,W}(B))=\Res^V_{T,W}( \Phi (B))$.\\
{\rm (b)} $\Phi (\Ind^V_{T,W}(B))=\Ind^V_{T,W}( \Phi (B))$
\end{cor}
\begin{proof}
Part (a) follows from  Corollary \ref{cor3.2} and the definition of $\Res^V_{T,W}$ in (\ref{eq3.15}).
The proof of Part (b) is the same as that for Proposition 10.2.6 in \cite{L5}.
\end{proof}

\subsection{Additive generators}

Let $v$ be an indeterminate and $\mathbf{A}=\mathbb{Z}[v,v^{-1}]$.
Let $\mathbf{M}_V$ be the Grothendieck group of the category which
consists of all direct sums of $L_{V,\uik}$
for various $(\uik)$ and their shifts. Define an $\mathbf A$-action on $\mathbf M_V$ by
\begin{equation}\label{action}
  v^n \cdot  L=L[n].
\end{equation}
 Then $\mathbf M_V$ is
an  $\mathbf A$-module generated by
$L_{V,\uik}$. Let $\mathbf K_V$ be the
Grothendieck group of category $\mathcal{Q}^f_V$.
Then under the action (\ref{action}), $\mathbf K_V$ is an
$\mathbf A$-module generated by the simple perverse sheaves in $\mathcal{P}^f_V$.

\begin{prop}\label{thm3.2}
$\mathbf M_V \simeq \mathbf K_V$ as an $\mathbf A$-module, i.e.,
$\left\{L_{V,\uik}\ |\ \forall (\uik)\right\}$  are the additive generators of $\mathbf K_V$.
\end{prop}
The proof of the this proposition is quite involved.
To reduce the length of the paper, we skip the proof.
The detailed proof can be found in \cite[Section 4.2.6]{Fan0}.

\subsection{The algebra ($\mathbf K$, $\Ind$)}
For an $I$-graded free $R$-module $V$, we denote
 $|V|=({\rm Rank}(V_i))_{i \in I}$.
 It is important to notice that, given two different $I$-graded free $R$-modules $V$ and $V'$ with $|V|=|V'|$, $\mathbf K_V \simeq \mathbf K_{V'}$ since $E_V$ and $E_{V'}$ are isomorphism spaces.
We shall write $\mathbf K_{|V|}$ (resp. $\mcal P_{|V|}^f$) instead of $\mathbf K_V$ (resp. $\mcal P_V^f$).
Moreover, the functors $\Ind^V_{T,W}$ and $
\Res^V_{T,W}$ can be rewritten as $\Ind^{|T|+|W|}_{|T|,|W|}$ and $
\Res^{|T|+|W|}_{|T|,|W|}$, respectively.
Let $\mathbf K=\oplus_{\nu \in \mathbb{N}I} \mathbf K_{\nu}$.
We define a multiplication on $\mbf K$ by
 \begin{align*}
\Ind: \mathbf K
\times \mathbf K &\rightarrow \mathbf K\\ %\ \rm{where}\
 (A,B) &\mapsto
\Ind^{\tau+\omega}_{\tau,\omega}(A\otimes B)
 \end{align*}
for homogenous elements $A,B$ with $A \in \mathbf K_{\tau}$ and
$B \in \mathbf K_{\omega}$.
\begin{thm}\label{thm3.3}
{\rm (a)} $(\mathbf K, \Ind)$ is an $\mbb N[I]$-graded associative $\mathbf A$-algebra, which is called the geometric composition algebra.

{\rm (b)} All simple perverse sheaves in $\mathcal{P}_{\nu} ^f$ for various $\nu\in \mbb N[I]$ form an
$\mathbf A$-basis of $\mathbf K$.
 This basis is called the
canonical basis.

\end{thm}
\begin{proof} (a) follows from Proposition \ref{thm3.2}, Corollary \ref{cor3.1} and additivity of $\Ind$.
A proof of associativity without help of Proposition \ref{thm3.2} can be found in \cite[Section 3.3]{L2}.
(b) follows from the definition of $\mathbf K$.
\end{proof}

\section {Ringel Hall algebra over exact categories}\label{sec4.1}

To give an algebraic construction corresponding to the algebra $\mbf K$,
one needs to consider projective representations of quivers over $R$.
The category of projective representations of quivers over $R$ is an exact category.
Hubery defines a Hall algebra over an exact category in \cite{H1}.
It is natural to ask if there exists a coalgebra structure of this algebra.
 In general, the coalgebra structure can not be obtained by any twist analogue to that for abelian categories
 even when the homological dimension of the
 category is 1.

\subsection{Exact categories}
Let $\mathcal{A}$ be an additive
category which is a full subcategory of an abelian category
$\mathcal{B}$ and closed under extension in $\mathcal{B}$. Let
$\mathcal{E}$ be a class of sequences
$$\xymatrix{0 \ar[r] & M' \ar[r]^{i}& M \ar[r]^{j} & M'' \ar[r] & 0}$$
 in $\mathcal{A}$ which are exact in the abelian category
$\mathcal{B}$. A map $f$ is called an inflation (resp. a deflation )
if it occurs as the map $i$ (resp. $j$ ) of some members in
$\mathcal{E}$. Inflations and deflations will be denoted by
$M'\rightarrowtail M$ and $M \twoheadrightarrow M''$, respectively.
The pair $M'\rightarrowtail M \twoheadrightarrow M''$ is called a
conflation.
An {\it exact category} is the additive category
$\mathcal{A}$ equipped with a family $\mathcal{E}$ of the short
exact sequences of $\mathcal{A}$ satisfying certain properties.
We refer to \cite{Q} for more details.

\subsection{Representations of quivers over commutative rings}\label{sec4.2}
 A {\it representation} $(V,x)$
 of a quiver $\Gamma =(I,H,s,t)$ over a commutative ring $R$ is an $I$-graded  $R$-module $V$ together with a set
 $\left\{x_{h}\right\}_{h\in H}$ of
 $R$-linear transformations
 $x_{h}:V_{h'}\rightarrow V_{h''}$.

 A {\it homomorphism} from
 one representation $(V,x)$ to another representation $(W,y)$ is a collection
 $\left\{g_{i}\right\}_{i\in I}$ of $R$-linear maps
 $g_{i}:V_{i}\rightarrow W_{i}$, such that
 $g_{h''}x_{h}=y_{h}g_{h'}$ for all
  $h \in H$. If all $g_{i}$ are $R$-isomorphisms, $(V,x)$ and $(W,y)$ are
 said to be {\it isomorphic}.

Let $Rep_R(\Gamma)$ be the category of representations of $\Gamma$
over $R$, which is an abelian category. If $V$ is an $I$-graded projective $R$-module,
 then  $(V,x)$ is called
a {\it locally projective representation} of $\Gamma$ over $R$.
All such representations form a full subcategory of
 $Rep_R(\Gamma)$, denoted by $Rep_R^f(\Gamma)$.
In this case that $V$ is a locally projective representation of $\Gamma$ over $R$, the dimension
 vector $|V|$ is well-defined.
\begin{lem}\label{lem3.2}$Rep_R^f(\Gamma)$ is an exact category with
homological dimension 1.
\end{lem}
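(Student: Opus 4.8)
The plan is to verify the two assertions separately: first that $Rep_R^f(\Gamma)$ is an exact category in the sense of Quillen, and then that it has homological dimension $1$. For the exact structure, I would realize $Rep_R^f(\Gamma)$ as a full subcategory of the abelian category $Rep_R(\Gamma)$, closed under extensions, and take $\mathcal{E}$ to be the class of those short exact sequences of $Rep_R(\Gamma)$ all of whose terms are $R$-free representations. The key point is that $Rep_R^f(\Gamma)$ is closed under extensions in $Rep_R(\Gamma)$: if $0 \to M' \to M \to M'' \to 0$ is exact with $M', M''$ $I$-graded free $R$-modules, then vertexwise we get short exact sequences $0 \to M'_i \to M_i \to M''_i \to 0$ of $R$-modules; since $M''_i$ is free, hence projective, each sequence splits, so $M_i \cong M'_i \oplus M''_i$ is free, and therefore $M \in Rep_R^f(\Gamma)$. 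Once closure under extension is established, the general principle (see \cite{T}, or the fact that an extension-closed full additive subcategory of an abelian category inherits an exact structure) gives axioms (i), (ii), (iii) of the definition directly; I would cite this rather than reprove each axiom, noting only that deflations are exactly the vertexwise-surjective morphisms with $R$-free kernel and inflations the vertexwise-split-injective morphisms with $R$-free cokernel.

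For the homological dimension statement, the claim is that $\operatorname{Ext}^2_{\mathcal{E}}$ vanishes, i.e. every $2$-extension in $Rep_R^f(\Gamma)$ (Yoneda Ext computed inside the exact category) is trivial. The natural approach is to compare with the ambient abelian category and with the base ring. The path algebra $R\Gamma$ has global dimension governed by $R$ and $\Gamma$: since $\Gamma$ is loop-free, $k\Gamma$ is hereditary, and $R = k[t]/(t^n)$ is self-injective of dimension... — actually $R$ has infinite global dimension, so I must be careful: the abelian category $Rep_R(\Gamma)$ does \emph{not} have finite homological dimension, and the point of passing to the $R$-free subcategory is precisely that the bad homological behaviour of $R$ is suppressed. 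So the right statement to prove is: for $M, N \in Rep_R^f(\Gamma)$, any conflation-theoretic $2$-fold extension splits. I would argue as follows. Given classes in $\operatorname{Ext}^1_{\mathcal{E}}(M,N)$, a representing conflation $N \rightarrowtail E \twoheadrightarrow M$ corresponds vertexwise to split sequences of $R$-modules, so $E_i \cong N_i \oplus M_i$ as $R$-modules and the extension is encoded entirely by the arrow maps; unwinding this, $\operatorname{Ext}^1_{\mathcal{E}}(M,N)$ is computed by the complex $\prod_{i} \operatorname{Hom}_R(M_i, N_i) \xrightarrow{d} \prod_{h \in H} \operatorname{Hom}_R(M_{h'}, N_{h''})$, i.e. it is the cokernel of a two-term complex, hence $\operatorname{Ext}^{\geq 2}_{\mathcal{E}} = 0$. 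Concretely, I would set up the two-term complex $C^\bullet(M,N): \ \bigoplus_{i \in I}\operatorname{Hom}_R(M_i,N_i) \to \bigoplus_{h \in H}\operatorname{Hom}_R(M_{h'},N_{h''})$ with differential $(\phi_i) \mapsto (y_h \phi_{h'} - \phi_{h''} x_h)$, show $\operatorname{Hom}(M,N) = H^0(C^\bullet)$ and $\operatorname{Ext}^1_{\mathcal{E}}(M,N) = H^1(C^\bullet)$ (the latter using that all $M_i$ are $R$-projective so that no higher $\operatorname{Ext}_R$ terms intrude), and conclude that the complex has length $1$, forcing homological dimension $\le 1$; surjectivity of a simple arrow onto a nonsplit extension shows it is exactly $1$.

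The main obstacle I anticipate is not the exact-category axioms, which are formal once extension-closure is checked, but rather being precise about what ``homological dimension $1$'' means for an exact category and pinning down that $\operatorname{Ext}^1_{\mathcal{E}}$ genuinely agrees with $H^1$ of the two-term complex above. The subtlety is that Yoneda $\operatorname{Ext}$ in an exact category is defined via equivalence classes of conflation sequences (admissible exact sequences), and one must check that \emph{every} element of $\operatorname{Ext}^2$ in that sense is represented by something that splits — equivalently, that the natural map from $H^1$ of the complex to $\operatorname{Ext}^1_{\mathcal{E}}$ is an isomorphism and that the complex has no $\operatorname{Ext}^2$ contribution. This is where $R$-freeness is essential and must be invoked carefully: the vertexwise splitting (projectivity of $M_i$) is exactly what kills the terms $\operatorname{Ext}^1_R(M_i, N_i)$ that would otherwise appear and extend the complex to length $2$ or more. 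I would handle this by an explicit Baer-sum / pullback argument showing that any admissible $2$-extension $N \rightarrowtail E_1 \rightarrowtail E_0 \twoheadrightarrow M$ can be spliced through vertexwise-split pieces and hence is equivalent to the trivial one, and then record the dimension-$1$ conclusion.
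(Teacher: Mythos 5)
Your outline for the exact structure coincides with the paper's: take $\mathcal{E}$ to be the class of all short exact sequences in $Rep_R(\Gamma)$ whose terms are $R$-free, and observe closure under extensions via the vertexwise splitting (projectivity of $M_i''$ over $R$). That part is clean and matches what the paper does in spirit, though the paper simply asserts the exact structure and then characterizes inflations/deflations.

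For the homological dimension, you take a genuinely different route. The paper exhibits an explicit two-term projective resolution in the exact category -- the standard (Crawley-Boevey) sequence
\[
0 \to \bigoplus_{\rho \in H} Ae_{\rho''} \otimes_R e_{\rho'}X \to \bigoplus_{i \in I} Ae_i \otimes_R e_iX \to X \to 0,
\]
with $A = R\Gamma$, valid for any $R$-free $X$ -- from which $\operatorname{Ext}^{\geq 2} = 0$ is immediate, since each $Ae_i$ is a projective object of $Rep_R^f(\Gamma)$ (a conflation ending at $Ae_i$ splits because $\operatorname{Hom}_A(Ae_i,-) = e_i(-)$ and the vertexwise sequences are $R$-split). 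You instead work on the dual side, writing down the two-term Hom complex $\bigoplus_i \operatorname{Hom}_R(M_i,N_i) \to \bigoplus_h \operatorname{Hom}_R(M_{h'},N_{h''})$ and identifying $H^0 = \operatorname{Hom}$ and $H^1 = \operatorname{Ext}^1_{\mathcal{E}}$. That identification is correct and your description of it via the block form of the arrow maps is fine. But the step ``it is the cokernel of a two-term complex, hence $\operatorname{Ext}^{\geq 2}_{\mathcal{E}} = 0$'' is, as stated, a non sequitur: knowing that $\operatorname{Ext}^1$ is computed by a given cochain in degree $1$ does not by itself imply that $\operatorname{Ext}^2$ vanishes. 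You recognize this in your final paragraph and propose to repair it with a Baer-sum/splicing argument on admissible $2$-extensions, but you never carry that out, and it is the crux. The cleanest way to close the gap is precisely the paper's: recognize that your Hom complex is literally $\operatorname{Hom}(P_\bullet,N)$ for $P_\bullet$ the length-$1$ projective resolution above, so the vanishing of higher Ext is automatic. What your approach buys is a concrete formula for $\operatorname{Ext}^1$ that is convenient for dimension counts; what the paper's buys is an unambiguous, one-line proof of $\operatorname{Ext}^{\geq 2} = 0$. I would suggest keeping your Hom-complex computation but anchoring it to the projective resolution rather than attempting the splicing argument directly.
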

\begin{proof}It is easy to see $Rep_R^f(\Gamma)$ is an additive
category. Let $\mathcal{E}$ be the set of all possible short exact
sequences in $Rep_R^f(\Gamma)$. Then $Rep_R^f(\Gamma)$ with the class $\mathcal{E}$ is an
exact category. In this case, an inflation is an injective map,
such that the cokernel is an $I$-graded projective $R$-module and a
deflation map is a surjective map, such that the kernel is an
$I$-graded peojective $R$-module.

Let $A=R\Gamma$. To show the homological dimension of $Rep_R^f(\Gamma)$ is 1, it is enough to show %If $X$ is a $R$-free left $R\Gamma$-module, we claim
that sequence
 \begin{equation}\label{resolution}
\xymatrix{ 0 \ar[r] & \oplus_{\rho \in H}Ae_{\rho''} \otimes_R
e_{\rho'}X \ar[r]^{f} & \oplus_{i \in I}Ae_i \otimes_R
e_iX \ar[r]^-g & X \ar[r] & 0}
\end{equation}
is exact for any locally projective left $A$-module $X$,
where $e_i$ is the trivial path
for the vertex $i$, and $g(a \otimes x)=ax, f(a\otimes x)=a\rho \otimes x-a \otimes \rho x$.
The proof of exactness is the same as that for the stand resolution in \cite{craw1992}.
\end{proof}

Here the homological dimension 1 refers to $\Ext^n(X,Y)$ vanishing
for all $n \geq 2\ {\rm and}\ X,Y \in \mathcal{A}$. We refer to
\cite[Chapter 6]{FS} for the definition of $\Ext^n(X,Y)$ in an exact category.

\subsection{The Hall algebra over an exact category}

 Let $\mathcal{A}$ be a finitary and small
exact category. Denote by $W^L_{XY}$ the set of all conflations $Y
\rightarrowtail L \twoheadrightarrow X$. The group $Aut(X) \times
Aut(Y)$ acts on $W^L_{XY}$ via:
$$\xymatrix{ Y \ar[r]^{f} \ar[d]^{\eta} & L \ar[r]^{g} \ar@{=}[d] & X \ar[d]^{\varepsilon}\\
Y \ar[r]^{\overline{f}}& L\ar[r]^{\overline{g}}&X.}$$
Denote by $V^L_{XY}$ the quotient set of $W^L_{XY}$ by the group $Aut(X) \times
Aut(Y)$. Since $f$ is an inflation and
$g$ is a deflation, this action is free. So
$$F^L_{XY}:=|V^L_{XY}|=\frac{|W^L_{XY}|}{a_Xa_Y},$$
where $a_X=|Aut(X)|$. The Hall algebra $\mathbf H
(\mathcal{A})$ is defined as the free $\mathbb{Z}$-module on the set of isomorphism classes of objects.
By abuse of a notation,
we write $X$ for the isomorphism classes $[X]$, and use the
numbers $F^L_{XY}$ as the structure constants of multiplication.
Define
$$X\circ Y:= \sum_{L}F^L_{XY}L,$$

\begin{prop}[\cite{H1}]\label{thm3.1} The Hall algebra $\mathbf H(\mathcal{A})$ of
a finitary and small exact category $\mathcal{A}$ is an associative
unital algebra.
\end{prop}

We now assume that $\mathcal{A}=Rep^f_R(\Gamma)$.
 For $\nu=(a_i)_{i \in I}, \tau=(b_i)_{i \in I}$,
 we define a bilinear form on $\mathbb{N}[I]$ by
\begin{equation}\label{eq3.1.1}
\langle \nu, \tau \rangle:=\sum_{i\in I}a_ib_i-\sum_{h\in H}a_{h'}b_{h''}.
%\langle \alpha, \beta \rangle:=2\sum_{i\in I}a_ib_i-\sum_{h\in H}(a_{h'}b_{h''}+a_{h''}b_{h'}).
\end{equation}

For any $X,Y \in Rep^f_R(\Gamma)$, we define a  multiplication by
\begin{equation}\label{mult}
XY:=q^{n\langle |X|, |Y| \rangle}X \circ Y.
\end{equation} %q^{\frac{n}{2}(|X|, |Y|)}X \circ Y.$$

\begin{prop}\label{thm3.1.1}
$\mathbf H(Rep^f_R(\Gamma))$ equipped with the  multiplication in (\ref{mult}) is an associative unital algebra.
\end{prop}
\begin{proof}
Proposition follows the bilinearity of $\langle-,-\rangle$.
\end{proof}

\subsection{The coalgebra structure}\label{sec3.3.2} Let $\Delta: \mathbf H(\mathcal{A})
\rightarrow \mathbf H(\mathcal{A}) \otimes
\mathbf H(\mathcal{A})$ be the map as following,
\begin{equation}\label{eq3.2}
  \Delta(E):= \sum_{M,N} \langle |M|,|N|\rangle F^E_{MN} \frac{a_M
a_N}{a_E} M \otimes N,
\end{equation}
where $M,N$ run through all conflations $M \rightarrowtail E
\twoheadrightarrow N$.
If one defines the
twisted multiplication on $\mathbf H(\mathcal{A}) \otimes
\mathbf H(\mathcal{A})$ to be
\begin{equation}\label{eq3.3}
  (A \otimes B) \cdot (C \otimes D):=q^{\frac{n}{2}(\langle |B|,|C|\rangle + \langle |C|,|B|\rangle)}  AC
\otimes BD,
\end{equation}
then as Green shows, in \cite{Gr}, the map $\Delta$ defined in (\ref{eq3.2}) is an
algebra homomorphism with respect to a twisted multiplication
on $\mathbf H(\mathcal{A}) \otimes \mathbf H(\mathcal{A})$ when
$\mathcal{A}$ is a hereditary abelian category, i.e., $\Delta$ gives a coalgebra structure on $\mathbf H(\mathcal{A})$.
Howerver, $\Delta$ is not a
homomorphism of algebras if $\mathcal{A}$ is an exact category.

 The following counterexample shows that  $\Delta:
\mathbf H(\mathcal{A}) \rightarrow \mathbf H(\mathcal{A})
\otimes \mathbf H(\mathcal{A})$, defined in (\ref{eq3.2}), cannot be an
algebra homomorphism under any twist in the case of $\mathcal{A}=Rep_R^f(\Gamma)$.

\begin{ex}\label{ex3.1}
 Let $\Gamma=A_2: 1 \rightarrow 2$, $R=k[t]/(t^n)\ (n>2)$, and $M=N=[R \xrightarrow{t} R]$.

If $\Delta$ is an algebra homomorphism, we must have %for any
\begin{equation}\label{eq3.4}
  \Delta(MN)=\Delta(M)\Delta(N).
\end{equation}

 On the right hand side of (\ref{eq3.4}), we consider the following diagram
  $$\xymatrix{
D\  \ar@{>->}[r]\  \ar@{>->}[d] & X\  \ar@{->>}[r] & B\  \ar@{>->}[d] \\
M\ \ar@{->>}[d] && N\ \ar@{->>}[d] \\
C \ \ar@{>->}[r] & Y \ \ar@{->>}[r] & A,}$$
where all possible choices for $B$ and $D$ are $[0
\xrightarrow {0} R]$, $[R \xrightarrow{t} R]$, and $[0 \xrightarrow{0}
0]$. Thus, all possible choices for $X$ are $[0 \xrightarrow{0} R^2]$, $[0
\xrightarrow {0} R]$,
$[0 \xrightarrow{0} 0]$, $[R \xrightarrow{t} R]$, \([R
\xrightarrow{\begin{bmatrix} t \\0 \end{bmatrix}} R^2]\),
and \( [R^2 \xrightarrow{
\begin{bmatrix}
  t & a\\0 & t
\end{bmatrix}}  R^2]\), where $a \in R$.

On the left hand side of (\ref{eq3.4}), we consider the following diagram
$$\xymatrix{& X\  \ar@{>->}[d]&
\\M\  \ar@{>->}[r] & E \ \ar@{->>}[r] \ \ar@{->>}[d]&N
\\&Y&}$$
Here \(E \simeq [R^2 \xrightarrow{\begin{bmatrix}t & a \\ 0 & t
\end{bmatrix}} R^2]\).
If $a \in tR$, then \[\begin{bmatrix}t&a\\0&t\end{bmatrix} \simeq \begin{bmatrix}t&0\\0&t\end{bmatrix}.\]  If $a$ is invertible in $R$, then \[\begin{bmatrix}t&a\\0&t\end{bmatrix} \simeq \begin{bmatrix}1&0\\0&t^2\end{bmatrix}.\]
Let \(E_1 \simeq [R^2 \xrightarrow{\begin{bmatrix}t & 0 \\ 0 & t
\end{bmatrix}} R^2]\) and \( E_2 \simeq [R^2 \xrightarrow{\begin{bmatrix}1 & 0 \\ 0 & t^2
\end{bmatrix}} R^2]\). Then,
$MN=b E_1 + c E_2$ for some nonzero numbers $b$ and $c$.
It is clear that $\Delta(E_2)$ has a summand $[R \xrightarrow{1} R]\otimes [R \xrightarrow{t^2} R]$.
This term, however, never appears on the
right hand side of (\ref{eq3.4}). This shows that $\Delta$
cannot be an algebra homomorphism.
\end{ex}

For $i\in I$, let $S_i$ be the unique projective representation of $\Gamma$ with $\underline{\dim}S_i=(\delta_{ij})_{j\in I}$.
The algebraic composition algebra associated to $\Gamma$ over $R$, denoted by $\mathbf C(R\Gamma)$, is the subalgebra of
 $\mathbf H(R\Gamma)$ generated by all $S_i$, for $i
\in I$.

\subsection{Relation between $\mathbf K$ and $\mathbf C(R\Gamma)$}

Let $\mathbf H(R\Gamma)^*$ be the dual Hall algebra of
$\mathbf{H}(R\Gamma)$, i.e., $\mathbf H(R\Gamma)^*=
\oplus_{\nu}\mathbf H(R\Gamma)_{\nu}^*$. Here
$\mathbf H(R\Gamma)_{\nu}^*$ is the set of all $\mathbb{C}$-valued
functions on the set of isomorphism classes of all representations
$M$ of $\Gamma$ over $R$ with dimension vector $|M| = \nu$. The
multiplication on $\mathbf H(R\Gamma)^*$ is defined as follows:
$$(f_1 \cdot f_2)(E)=
\sum_{N \subset E} f_1(E/N)f_2(N).$$ 
  Let $\mathbf C(R\Gamma)^*$ be the subalgebra of
$\mathbf H(R\Gamma)^*$ generated by $\delta_{S_i}, \forall i \in I$,
where $\delta_{S_i}$ is the characteristic function of $S_i$, i.e,
 $$\delta_{S_i}(x)=\left\{\begin{array}{ccc}1&& {\rm if}\ x=S_i\\0&&
{\rm others.}\end{array}\right.$$
By the following formular, $\mathbf C(R\Gamma)^*$ is isomorphic to the algebra $\mathbf C(R\Gamma)$.
$$(\delta_M \cdot \delta_N)(E)=\#\left\{L \subset E \ |\ L\simeq N, E/L \simeq M\right\}=F^E_{M,N}.$$

We define a map
\begin{align*}
  \chi: \mathbf K &\rightarrow \mathbf C(R\Gamma)^*\\
  A &\mapsto \chi_A,
\end{align*}
where
 $\chi_A(x)$
 is defined in Section \ref{sec2.5}.
\begin{prop}\label{thm3.5}
 The map $\chi$ is a surjective algebra homomorphism.
\end{prop}
\begin{proof}
By Theorem 4.1(b) in \cite{Lin},
  $\chi$ is an algebra homomorphism.
  We now show that $\chi$ is a surjective map.
 It is enough to show $\chi_{L_i}=\delta_{S_i}$ for any $\delta_{S_i}$.
In fact, $$\chi_{L_i}(x)=\chi_{\pi_{i!}{\bf 1}}(x)=\sum_{y\in \pi_i^{-1}(x)}\chi_{\bf 1}(y)=\chi_{\bf 1}(\pi_i^{-1}(x))=\delta_{S_i}.$$

    Here $\pi_i$ is the obvious projection map. The penultimate equality is true because both $E_V$
     and $\widetilde{\mathcal{F}}_V$ contain a single point.
\end{proof}

Denote $\mathcal{I}_1={\rm Ker}(\chi)$. Then $\mathbf C(R\Gamma) \simeq
\mathbf K/\mathcal{I}_1$.

We note that, for the case that $n=1$, i.e., $R$ is a field, Lusztig and Ringel show that
the ideal $\mcal I_1$ is  generated by the quantum Serre relations.
However, for
the case that $n\geq 2$, the ideal
$\mathcal{I}_1$ is  more complicated.
The following example gives some ideas of what
$\mathcal{I}_1$ is.

\begin{ex}\label{ex3.2}
  Fix $R=\mathbb{F}_q[t]/(t^n)$, consider quiver $A_2: 1 \rightarrow 2$. Then $S_1: [R \rightarrow
  0]$ and $S_2: [0 \rightarrow R]$ are all simple objects. By computation, we have
\begin{equation*}
  \begin{split}
  &S^2_1=q^{n/2}(q^n+q^{n-1})[ R^2 \rightarrow 0],\\
&S_1S_2=q^{-n/2}([R \xrightarrow{0} R]+[R \xrightarrow{1}
  R]+[R \xrightarrow{t} R]+\cdots +[R \xrightarrow{t^{n-1}} R]),\\
&S_2S_1=[R \xrightarrow{0} R]\\
&S^2_1S_2=q^{-n/2}(q^n+q^{n-1})([R^2 \xrightarrow{0} R]+[R^2 \xrightarrow{(1,0)}
  R]+[R^2 \xrightarrow{(t,0)} R]+\cdots +[R^2 \xrightarrow{(t^{n-1},0)} R]),\\
&S_1S_2S_1=(q^n+q^{n-1})[R^2 \xrightarrow{0} R]+[R^2 \xrightarrow{(1,0)}
  R]+q[R^2 \xrightarrow{(t,0)} R]+\cdots +q^{n-1}[R^2 \xrightarrow{(t^{n-1},0)} R],\\
&S_2S^2_1=q^{n/2}(q^n+q^{n-1})[R^2 \xrightarrow{0} R].
 \end{split}
\end{equation*}
There is no %similar
quantum Serre relation at this time.

\end{ex}

\section{Quantum generalized Kac-Moody algebras} 

\subsection{} 

Let $I$ be a countable index set. A symmetric {\it generalized
root datum} (see \cite{KS}) is a matrix $A=(a_{ij})_{i,j\in I}$ satisfying the
following conditions:
\begin{itemize}
\item[(a)] $a_{ii} \in \left\{2,0,-2,-4, \cdots \right\}$, and
\item[(b)] $a_{ij}=a_{ji} \in \mathbb{Z}_{\leq 0}.$
\end{itemize}
Such a matrix is a special case of Borcherds-Cartan matrix. Let
$I^{re}=\left\{i \in I\ |\ a_{ii}=2\right\}$ and $I^{im}=I\setminus
{I^{re}}$. A collection of positive integers ${m}=(m_i)_{i \in I}$
with $m_i=1$ whenever $i\in I^{re}$ is called the charge of $A$.

\subsection{} %{Quantum generalized Kac-Moody algebras}

The {\it quantum generalized Kac-Moody algebra} (see \cite{KS}) associated with
$(A,m)$ is the $\mathbb{Q}(v)$-algebra $\mbf U_v(\mathfrak{g}_{A,m})$
generated by the elements $K_i,K^{-1}_i,$ $E_{i,k}$, and $F_{i,k}$
for $i \in I, k=1, \cdots ,m_i$ subject to the following relations:
\begin{equation*}
 K_iK^{-1}_i=K^{-1}_iK_i=1,\  K_iK_j=K_jK_i,
\end{equation*}
\begin{equation*}
K_iE_{jk}K^{-1}_i=v^{a_{ij}}E_{jk},\
K_iF_{jk}K^{-1}_i=v^{-a_{ij}}F_{jk},
\end{equation*}
\begin{equation*}
E_{ik}F_{jl}-F_{jl}E_{ik}=\delta_{lk}\delta{ij}
\frac{K_i-K^{-1}_i}{v-v^{-1}},
\end{equation*}
\begin{equation*}
 \sum_{n=0}^{1-a_{ij}}(-1)^n \left[
{{1-a_{ij}}\atop n}\right]E^{1-a_{ij}-n}_{ik}E_{jl}E^n_{ik}=0,
\forall i\in I^{re}, j\in I, i\neq j,
\end{equation*}
\begin{equation*}
 \sum_{n=0}^{1-a_{ij}}(-1)^n \left[
{{1-a_{ij}}\atop n}\right]F^{1-a_{ij}-n}_{ik}F_{jl}F^n_{ik}=0,
\forall i\in I^{re}, j\in I, i\neq j, and
\end{equation*}
\begin{equation*}
E_{ik}E_{jl}-E_{jl}E_{ik}=F_{ik}F_{jl}-F_{jl}F_{ik}=0, \ \rm{if} \
\it{a}_{ij}=\rm{0}.
\end{equation*}
Here $\left[n \atop k \right]=\frac{[n]!}{[n-k]![k]!},
[n]!=\prod^n_{i=1}[i]$, and $[n]=\frac{v^n-v^{-n}}{v-v^{-1}}.$

In this paper, we only consider the case in which $I=I^{im}$ and all $m_i=1$.
 Under this assumption, we shall simply write $E_i$ (resp. $F_i$) instead of $E_{ik}$ (resp. $F_{ik}$).

\subsection{Relation between $\mathbf K$ and $\mbf U^-_v$}

Define a bilinear form on $\mathbf K$ as follows,
$$(L,M)_{\mathbf K}=\sum_jd_j(E_V,G_V;L,M)v^{-j}\quad \text{for all semisimple complexes}\ L, M.$$
\begin{prop}\label{prop3.11}
The bilinear form $(-,-)_{\mathbf K}$ defined above is non-degenerate.
\end{prop}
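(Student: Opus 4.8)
The plan is to compute the Gram matrix of $(-,-)_{\mathcal{K}}$ in the canonical basis and observe that it is the permutation matrix of an involution, hence invertible over $\mathcal{A}$. First I would note that $d_j(E_V,G_V;A,B)$ only makes sense when $A$ and $B$ live on the same space $E_V$, so $(-,-)_{\mathcal{K}}$ decomposes as an orthogonal sum of bilinear forms on the homogeneous summands $\mathcal{K}_{|V|}$ of $\mathcal{K}$, and it suffices to prove non-degeneracy of each restriction $(-,-)_{\mathcal{K}}|_{\mathcal{K}_V}$. By Theorem \ref{thm3.2} (equivalently Theorem \ref{thm3.3}(3)), $\mathcal{K}_V$ is the free $\mathcal{A}$-module on the set $\mathcal{I}_V$ of isomorphism classes of simple perverse sheaves in $\mathcal{P}^f_V$, with $v$ acting by shift.

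Next I would check that Verdier duality induces an involution $\sigma$ of $\mathcal{I}_V$. By Proposition \ref{prop3.2} each $\widetilde{L}^f_{V,\underline{i},\underline{k}}$ is a semisimple complex, so by Proposition \ref{prop3.2.1} the complex $L^f_{V,\underline{i},\underline{k}}$ is simultaneously semisimple and a perverse sheaf; being a perverse sheaf forces all shifts occurring in its decomposition-theorem decomposition to vanish, so it is a genuine finite direct sum of simple perverse sheaves, each belonging (by definition of $\mathcal{P}^f_V$) to $\mathcal{I}_V$, and every element of $\mathcal{I}_V$ occurs among these summands for some $(\underline{i},\underline{k})$. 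Since $\mathbb{D}$ sends a simple perverse sheaf to a simple perverse sheaf and $\mathbb{D}(L^f_{V,\underline{i},\underline{k}})=L^f_{V,\underline{i},\underline{k}}$ by Proposition \ref{prop3.2.1}, the set of simple summands of each $L^f_{V,\underline{i},\underline{k}}$ is stable under $\mathbb{D}$; taking the union over $(\underline{i},\underline{k})$ shows $\mathbb{D}(\mathcal{I}_V)=\mathcal{I}_V$, and $\mathbb{D}^2=\operatorname{id}$ gives the involution $\sigma:=\mathbb{D}|_{\mathcal{I}_V}$.

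Then I would evaluate the form on basis vectors. For $B,B'\in\mathcal{I}_V$ both $B$ and $B'$ are simple perverse sheaves, so by the properties of $d_j$ recalled in Section \ref{section4.4} one has $d_j(E_V,G_V;B,B')=0$ unless $j=0$ and $B\simeq\mathbb{D}B'$, in which case $d_0(E_V,G_V;B,B')=1$; hence $(B,B')_{\mathcal{K}}=\delta_{B,\sigma(B')}$. Thus the Gram matrix of $(-,-)_{\mathcal{K}}$ on $\mathcal{K}_V$ is the permutation matrix of $\sigma$. Consequently, if $x=\sum_{B\in\mathcal{I}_V}c_B\,B\in\mathcal{K}_V$ (a finite sum with $c_B\in\mathcal{A}$) satisfies $(x,y)_{\mathcal{K}}=0$ for all $y\in\mathcal{K}$, then pairing against each basis element gives $0=(x,B')_{\mathcal{K}}=c_{\sigma(B')}$ for every $B'\in\mathcal{I}_V$, so all $c_B=0$ and $x=0$; this is exactly non-degeneracy.

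The delicate point is the second step: one must be sure that the self-duality of Proposition \ref{prop3.2.1} together with the semisimplicity of Proposition \ref{prop3.2} really yields a decomposition of $L^f_{V,\underline{i},\underline{k}}$ into shift-free simple perverse summands that $\mathbb{D}$ merely permutes, so that the statement ``$\mathbb{D}$ acts as an involution on the basis $\mathcal{I}_V$'' is literally correct rather than only true up to shifts. Once that is established, the remainder is routine bookkeeping with the formal properties of $d_j$ listed in Section \ref{section4.4}.
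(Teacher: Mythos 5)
Your proof is correct and takes essentially the same route as the paper's: express an arbitrary element of $\mathcal{K}_V$ in the basis of simple perverse sheaves and pair it against basis elements, invoking the orthogonality relation $d_0(E_V,G_V;B,B')=\delta_{B,\mathbb{D}B'}$ (together with $d_j=0$ for the other $j$) from Section~\ref{section4.4}. You are somewhat more careful than the paper in one respect: the paper pairs $A=\sum_K c_K K$ against a summand $B$ of $A$ and asserts $(A,B)_{\mathcal{K}}=c_B$, which tacitly treats $B$ as self-dual, whereas you first verify, via the simultaneous semisimplicity (Proposition~\ref{prop3.2}) and perversity/self-duality (Proposition~\ref{prop3.2.1}) of $L^f_{V,\underline{i},\underline{k}}$, that $\mathbb{D}$ restricts to a genuine involution $\sigma$ on $\mathcal{I}_V$, and then pair against $\sigma(B')$; this makes the Gram-matrix-equals-permutation-matrix statement literally correct and closes a small gap in the paper's wording without changing the underlying idea.
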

\begin{proof}

Firstly, by the properties of $d_j(E,G;L,M)$ in Section \ref{section4.4}, this
is a bilinear form.
Secondly, by Theorem \ref{thm3.3}, all simple
perverse sheaves in
$\mathcal{P}^f_{\nu}$ for various $\nu$ form a
$\mathbb{Z}[v,v^{-1}]$-basis of $\mathbf K$. So, for any $L \in
\mathbf K$, $L$ can be written into $L=\sum_K
C_KK$
for $C_K \in \mathbb{Z}[v,v^{-1}]$ and all $K$ are simple perverse sheaves.
Now for any  $L \in
\mathbf K$, let $M$ be a simple direct summand of $L$. By Property
(d) of $d_j(E,G;L.M)$ in Section \ref{section4.4}, $(L,
M)_{\mathbf K} \neq 0$. Hence the bilinear form is
non-degenerate.
\end{proof}

Let $\mbf U^-_v$ be the $\mathbb{Z}[v,v^{-1}]$-subalgebra of $\mbf U_v(\mathfrak{g}_{A,m})$ generated by all
$F_{i}$.
 $\mbf U_v^-$ is an $\mbb N[I]$-graded algebra by setting $\deg F_i=i$.
 It is clear that $\mbf U^-_v$ only subjects to the relation $F_iF_j=F_jF_i$ for all $i, j\in I$ with $a_{ij}=0$.

We define a map
\begin{align*}
  f: \mbf U^-_v &\rightarrow \mathbf K\\
 F_i &\mapsto L_{i,1}. %, \ {\rm and} &\  q^{1/2} \mapsto v.
\end{align*}
It is easy to check that
$f(F_iF_j)=f(F_jF_i)$ if $a_{ij}=0$.
 So this map can be extended to an algebra
homomorphism.
In addition, $f$ preserves the grading, where the
grading of $B\in \mathbf K$ is defined as $\tau$
when $B$ is  in $\mbf K_{\tau}$.
 Now
define a bilinear form $(-,-)_{\mbf U}$ on $\mbf U^-_v$ as
$$(A,B)_{\mbf U}=(f(A),f(B))_{\mathbf K}.$$
\begin{prop}\label{thm3.4}
${\rm Ker}(f)=Rad(-,-)_{\mbf U}=:\mathcal{I}_2$, so $\mbf U^-_v/\mathcal{I}_2 \simeq
\mathbf K.$
\end{prop}
\begin{proof}
Obviously, ${\rm Ker}(f) \subset \mathcal{I}_2$.

Let us pick any $x \in \mathcal{I}_2$. Then for any $y \in
\mathbf K$, there exists $z \in \mbf U^-_v$ such that $f(z)=y$ due to
the fact that $f$ is a surjective map. Therefore,
$$0=(x,z)_{\mbf U}=(f(x),f(z))_{\mathbf K}=(f(x),y)_{\mathbf K}.$$
This implies that $f(x) \in Rad(-,-)_{\mathbf K}$. Since the bilinear
form $(-,-)_{\mathbf K}$ is non-degenerate, $f(x)=0$, i.e., $x
\in {\rm Ker}(f)$.
Hence $\mbf U^-_v/\mathcal{I}_2 \simeq \mathbf K$.
\end{proof}

%\bibliographystyle{amsplain-nodash}
%\bibliography{references}

\vspace{10pt}
  {Zhaobing Fan$^{1,2}$\\
1. Mathematics department,\ Kansas State University, \ U.S. \ 66506
 \\2. School of science,\ Harbin Engineering University,\ China,\
 150001\\
 \hspace{20pt}  \email{fanz@math.ksu.edu}}

\end{document}